\theoremstyle{plain}
\newtheorem{thm}{\protect\theoremname}[section]
\theoremstyle{plain}
\newtheorem{prop}[thm]{\protect\propositionname}
\theoremstyle{plain}
\theoremstyle{plain}
\newtheorem{lem}[thm]{\protect\lemmaname}
\theoremstyle{definition}
\newtheorem{defn}[thm]{\protect\definitionname}
\theoremstyle{remark}
\newtheorem*{rem*}{\protect\remarkname}
\theoremstyle{definition}
\newtheorem{example}[thm]{\protect\examplename}
\theoremstyle{remark}
\newtheorem{rem}[thm]{\protect\remarkname}
\newcommand{\RR}{\mathbb{R}}
\newcommand{\CC}{\mathbb{C}}
\providecommand{\corollaryname}{Corollary}
\providecommand{\definitionname}{Definition}
\providecommand{\examplename}{Example}
\providecommand{\lemmaname}{Lemma}
\providecommand{\propositionname}{Proposition}
\providecommand{\remarkname}{Remark}
\providecommand{\theoremname}{Theorem}
\begin{document}
\title{A collection of results relating the geometry of plane domains and the exit time of planar Brownian motion.}

\author{~Maher Boudabra, Andrew Buttigieg, and Greg Markowsky\\
{\small {\tt maher.boudabra@monash.edu} ~~ {\tt ajbut5@student.monash.edu } ~~ {\tt greg.markowsky@monash.edu }}\\
{\small Department of Mathematics,  Monash University, Australia}}

\maketitle

\begin{abstract}
    We prove a number of results relating exit times of planar Brownian with the geometric properties of the domains in question. Included are proofs of the conformal invariance of moduli of rectangles and annuli using Brownian motion; similarly probabilistic proofs of some recent results of Karafyllia on harmonic measure on starlike domains; examples of domains and their complements which are simultaneously large when measured by the moments of exit time of Brownian motion, and examples of domains and their complements which are simultaneously small; and proofs of several identities involving the Cauchy distribution using the optional stopping theorem.
\end{abstract}

\section{Introduction}

In this paper, we prove a series of loosely connected results relating the geometry of domains in the plane with the distribution of the exit time of planar Brownian motion from those domains. Results of this type seem to have originated in such seminal papers as \cite{davis1979} and \cite{BURKHOLDER1977182}, and continue to attract current researchers (see the references contained in \cite{markowsky2020planar} for a large list). We first informally describe our results. \\

In the next section, we show how Brownian motion can be used to prove a staple of complex analysis, which is the conformal invariance of the modulus of a rectangle, or of an annulus. In other words, conformal maps between rectangles which fix the vertices cannot change the ratio of the side lengths, and conformal maps between annuli cannot change the ratio of the radii of the annuli. In the following section, we show how Brownian motion can be used to prove some recent analytic results on harmonic measure on starlike domains due to Karafyllia. Next we discuss the relationship between moments of Brownian exit time and the Hardy norm of domains. We discuss how these quantities can indicate the size of a domain at infinity, but also discuss potentially surprising examples of domains and their complements which are simultaneously large when measured by the moments of exit time of Brownian motion, and examples where domains and their complements are simultaneously small. In the final section, we show how the optional stopping theorem can be used to prove a number of results on the Cauchy distribution which are proved in the literature by other methods. \\

We now fix notation. $B_t$ will always denote a planar Brownian motion, and given a domain $U$ let $\tau(U)$ denote the exit time of planar Brownian motion from $U$; that is, $\tau(U) = \inf\{ t \geq 0: B_t \notin U\}$. The next theorem is the core of planar Brownian motion theory.

\begin{thm} \label{Levy}

If $f$ is a non constant analytic function and $B_{t}$ is a planar Brownian motion starting at a then $f(B_{t})$ is a time-changed Brownian motion starting at $f(a)$, where the time change rate is governed by $$\sigma(t):=\int_{0}^{t}\lvert f'(B_{s})\rvert^{2}ds.$$
In other words, the process $f(B_{\sigma^{-1}(t)})$ is a planar Brownian motion. 

\end{thm}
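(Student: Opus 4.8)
The plan is to combine It\^o's formula with L\'evy's characterization of planar Brownian motion. Write $B_t = X_t + iY_t$, where $X$ and $Y$ are independent one-dimensional Brownian motions, and decompose $f = u + iv$ into its real and imaginary parts. These are harmonic functions satisfying the Cauchy--Riemann equations $u_x = v_y$ and $u_y = -v_x$. Applying It\^o's formula to $u(B_t)$ gives
$$ du(B_t) = u_x\,dX_t + u_y\,dY_t + \tfrac{1}{2}(u_{xx}+u_{yy})\,dt, $$
and since $u$ is harmonic the finite-variation term vanishes, so $u(B_t)$ is a continuous local martingale; the identical argument applies to $v(B_t)$.

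Next I would compute the quadratic variations and covariation. Using the Cauchy--Riemann equations together with the identity $|f'|^2 = u_x^2 + v_x^2 = u_x^2 + u_y^2 = v_x^2 + v_y^2$, one finds
$$ d\langle u(B)\rangle_t = d\langle v(B)\rangle_t = |f'(B_t)|^2\,dt = d\sigma(t), \qquad d\langle u(B), v(B)\rangle_t = (u_x v_x + u_y v_y)\,dt = 0, $$
the last equality following from $v_x = -u_y$ and $v_y = u_x$. Thus the real and imaginary parts of $f(B_t)$ are orthogonal continuous local martingales, each with quadratic variation equal to $\sigma(t)$.

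Since $f$ is non-constant and analytic, $f'$ has only isolated zeros, and as planar Brownian motion spends zero time at any fixed point, $\sigma$ is almost surely continuous and strictly increasing with a well-defined inverse $\sigma^{-1}$. Performing this common time change, the processes $u(B_{\sigma^{-1}(t)})$ and $v(B_{\sigma^{-1}(t)})$ become orthogonal continuous local martingales, each with quadratic variation $t$. By the multidimensional version of L\'evy's characterization theorem, the pair $f(B_{\sigma^{-1}(t)}) = (u(B_{\sigma^{-1}(t)}), v(B_{\sigma^{-1}(t)}))$ is therefore a planar Brownian motion, and it is clearly started at $f(a)$.

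The main obstacle is the rigorous justification of the time change: one must verify that $\sigma$ is a genuine time change, namely continuous, adapted, and strictly increasing, so that $\sigma^{-1}$ exists and collapses no interval. This is exactly where the non-vanishing of $f'$ on intervals, a consequence of analyticity, is essential, and where the two computations above must be combined—the equality of the quadratic variations guarantees that a single time change normalizes both coordinates simultaneously, while the vanishing covariation, preserved under the time change, supplies the independence of the two resulting Brownian coordinates.
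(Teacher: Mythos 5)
Your proof is correct, and it is the standard argument: It\^o's formula plus harmonicity to show $u(B_t)$ and $v(B_t)$ are continuous local martingales, the Cauchy--Riemann equations to identify the two quadratic variations with $\sigma(t)$ and to kill the cross-variation, and then the multidimensional L\'evy characterization after the common time change. The paper itself offers no proof of this statement --- it is quoted as classical background (L\'evy's conformal invariance theorem) and used as a black box throughout --- so there is nothing to compare against; your write-up is essentially the textbook proof one would cite. The only point worth tightening is the very last step: to know that $\sigma^{-1}(t)$ is defined for \emph{all} $t\ge 0$ (rather than only for $t<\sigma(\infty)$) you need $\sigma(\infty)=\infty$ almost surely; for an entire non-constant $f$ this follows from neighbourhood recurrence of planar Brownian motion (it spends infinite total time in any disc on which $|f'|$ is bounded below), and for $f$ defined only on a subdomain the statement should be read as holding up to the exit time. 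You should also record, when invoking L\'evy's characterization after the time change, that time-changing a continuous local martingale by the inverse of its own quadratic variation yields a continuous local martingale for the time-changed filtration; this is standard but is precisely the step that makes the argument rigorous.
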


Lévy's Theorem is often referred to as the {\it conformal invariance principle}, although a more accurate name might be the {\it analytic invariance principle}, since injectivity plays no role in the theorem. This result will play the major role in everything that is to follow.

\section{Exit distributions of Brownian motion from rectangles and annuli, and applications to conformal maps}

\subsection{Applications of $\psi$ and probabilistic proofs for known results.}

%In this section we apply the previous results to deduce some geometric
%properties of planar Brownian motion. 

We will use the following notation for a rectangle (with $a,b>0$).

\[
R_{a,b} :=(-a,a)\times(-b,b).
\]

Fix a rectangle $R_{a,b}$, and label the vertices clockwise as $z_1, \ldots, z_4$, starting with $z_1= a+bi$. Now suppose that we have a similarly defined rectangle $R_{a',b'}$ with vertices $z'_1, \ldots, z'_4$, and that there is a conformal map $f$ between them such that $f(z_j) = z'_j$ for $j=1, \ldots, 4$; note that in making this last statement we are implicitly using Caratheodory's Theorem (see \cite{krantz2006geometric}), which implies that such a conformal map between Jordan domains extends to a homeomorphism between the boundaries. The following is a standard result in complex analysis.

\begin{thm}
\label{thm:rectangle-eqiuiv} Given the setup described above, we have $\frac{a}{b}=\frac{a'}{b'}$, and furthermore $f(z) = cz$, where $c$ is a positive constant. 
\end{thm}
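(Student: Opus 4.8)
The plan is to use the probabilistic machinery set up in the paper, specifically Lévy's conformal invariance principle (Theorem \ref{Levy}), to control the harmonic measure on the four sides of the rectangle. The key observation is that the exit distribution of Brownian motion from a domain, started at an interior point, is precisely harmonic measure; and by Theorem \ref{Levy}, a conformal map $f$ transports Brownian motion in $R_{a,b}$ (up to a time change that does not affect the \emph{location} of the exit point) to Brownian motion in $R_{a',b'}$. Consequently, if we start Brownian motion at the center $0$ of $R_{a,b}$, the image $f(0)$ need not be the center of $R_{a',b'}$, so I first want to pin down the map at a symmetric point. The cleanest route is to exploit the symmetries of the rectangle: I would introduce the function $\psi$ alluded to in the subsection heading, which I expect encodes the probability of exiting through a particular pair of opposite sides, and show that this quantity is a conformal invariant because the relative harmonic measures of the four boundary arcs are preserved by $f$.

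The main steps, in order, are as follows. First, I would establish that because $f$ maps the four boundary sides of $R_{a,b}$ bijectively onto the four sides of $R_{a',b'}$ (fixing the labelled vertices), the harmonic measure that Brownian motion assigns to each side is preserved under $f$ when we compare the process started at $z$ in $R_{a,b}$ with the process started at $f(z)$ in $R_{a',b'}$. Second, I would compute, or characterize, the probability $\psi$ that Brownian motion started at a point exits through the pair of vertical sides (the sides $\{\pm a\}\times(-b,b)$) versus the horizontal sides; this is a harmonic function of the starting point solving a mixed Dirichlet problem, and its value at a distinguished point depends \emph{only} on the aspect ratio $a/b$. Third, I would argue by symmetry that $f$ must carry the center to the center and respect the horizontal and vertical reflection symmetries, forcing the aspect ratios to coincide, i.e. $a/b = a'/b'$. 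Finally, once the aspect ratios agree, $R_{a,b}$ and $R_{a',b'}$ differ by a real scaling $c = a'/a = b'/b$, and the composition $c^{-1} f$ is a conformal automorphism of $R_{a,b}$ fixing all four vertices; by the rigidity of conformal self-maps of a Jordan domain (the only automorphism of a rectangle fixing the vertices is the identity) I conclude $f(z) = cz$.

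The step I expect to be the main obstacle is establishing that the aspect ratio is genuinely determined by the exit-probability function $\psi$ in a way that is monotone or injective in $a/b$, so that equality of the probabilities forces equality of the ratios. Proving that $\psi$ is strictly monotone in the aspect ratio requires either an explicit series representation (the probability can be written via a Fourier expansion of the harmonic function solving the mixed boundary value problem on the rectangle) or a monotonicity argument comparing nested rectangles. I would handle this by writing down the harmonic function explicitly and extracting the dependence on $a/b$, showing it is strictly monotone. The remaining rigidity step, identifying $c^{-1}f$ with the identity, is comparatively routine: once the domains are congruent and $f$ fixes four boundary points, the Schwarz lemma applied to the disk via the Riemann map (or directly, observing that an automorphism of the rectangle fixing the vertices must be the identity) closes the argument.
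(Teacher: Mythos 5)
Your proposal is sound, and while its opening move coincides with the paper's, the second half follows a genuinely different route. Like the paper, you begin by using conformal invariance of harmonic measure to pin down $f$ at the centre: the paper makes this precise by showing that the set $W=\{z: P_z(B_{\tau}\in S_1)=P_z(B_{\tau}\in S_3)\}$ is exactly the vertical midline (symmetry gives one inclusion, and a strong Markov argument gives the other), doing the same horizontally, and intersecting; you will need essentially that uniqueness argument to justify ``the centre goes to the centre,'' so do spell it out. After that the two arguments diverge. The paper iterates the subdivision into four congruent sub-rectangles, obtains $f$ explicitly on the dense set of dyadic points, concludes $f(x,y)=(\tfrac{a'}{a}x,\tfrac{b'}{b}y)$ by continuity, and then invokes the Cauchy--Riemann equations to force $\tfrac{a'}{a}=\tfrac{b'}{b}$ --- no monotonicity lemma and no classical rigidity input are needed. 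You instead treat the probability $\psi$ of exiting through the pair of vertical sides from the centre as a conformal invariant depending only on the aspect ratio, prove it is strictly monotone in $a/b$, deduce $a/b=a'/b'$, and finish by observing that $c^{-1}f$ is an automorphism of $R_{a,b}$ fixing the four vertices and hence the identity. Both of your auxiliary steps are correct and available: strict monotonicity of $\psi$ follows cleanly from your nested-rectangle comparison (from the centre, exiting the wider rectangle through a vertical side forces first exiting the narrower one through a vertical side, and the subsequent conditional probability is strictly less than one), and the rigidity of an automorphism fixing three boundary points is standard via the Riemann map and the M\"obius description of disc automorphisms. What your route buys is modularity --- it isolates the statement ``the modulus is a strictly monotone conformal invariant,'' which is closer in spirit to the classical extremal-length proof --- at the cost of two extra lemmas; what the paper's route buys is a self-contained derivation of the explicit linear form of $f$ from the Brownian subdivision alone.
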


There are at least two known proofs of this. The standard one uses the concept of extremal length (see \cite{ahlfors2010conformal}, \cite{gardiner2000quasiconformal}, or \cite{keen2007hyperbolic}). The second, which is not as widely known, is to repeatedly apply Schwarz's reflection principle in order to extend $f$ to a conformal self-map of $\CC$. We present here a third, using the conformal invariance of Brownian motion.

\begin{proof}
Label the right side of $R_{a,b}$ as $S_1$, and then label the remaining sides $S_2, S_3, S_4$ clockwise. Label the sides of $R_{a',b'}$ similarly as $S'_1, \ldots, S'_4$. Let 

$$
W = \{z \in R_{a,b}: P_z(B_{\tau(R_{a,b})} \in S_1) = P_z(B_{\tau(R_{a,b})} \in S_3)\}.
$$

It is clear by symmetry that $i\RR \cap R_{a,b} \subseteq W$, where $i\RR$ denotes the imaginary axis. It is also not hard to see that $W \subseteq i\RR \cap R_{a,b}$, since a Brownian motion starting from some point not on $i\RR \cap R_{a,b}$ has a positive probability to exit on the nearest vertical side before hitting $i\RR \cap R_{a,b}$, but if it hits $i\RR \cap R_{a,b}$ before hitting the nearest vertical side it has equal probabilities of exiting on $S_1$ and $S_3$ after that by symmetry and the strong Markov property. We conclude that $W = i\RR \cap R_{a,b}$. By the conformal invariance of Brownian motion, for $z \in W$ we must have $P_{f(z)}(B_{\tau(R_{a',b'})} \in S'_1) = P_{f(z)}(B_{\tau(R_{a',b'})} \in S'_3)$, and this implies that $f(i\RR \cap R_{a,b}) = i\RR \cap R_{a',b'}$. The same argument, applied to the horizontal sides, shows that $f(\RR \cap R_{a,b}) = \RR \cap R_{a',b'}$. In particular, we see that $f(0) = 0$. Furthermore, $R_{a,b}$ and $R_{a',b'}$ are divided into four smaller rectangles by the real and imaginary axes, and we have shown that $f$ maps each of these rectangles in $R_{a,b}$ onto the corresponding one in $R_{a',b'}$. Applying the same argument to each of these smaller rectangles, and then iterating the argument shows that $f(\frac{j}{2^n} a + \frac{k}{2^n} bi) = \frac{j}{2^n} a' + \frac{k}{2^n} b'i$, where $n$ is any positive integer and $j,k$ are any odd integers with $|j|, |k| < 2^n$. These points form a dense set in $R_{a,b}$ and $f$ is continuous, so $f$ must be the map $(x,y) \to (\frac{a'}{a}x, \frac{b'}{b}y)$. However, the Cauchy-Riemann equations now imply that $\frac{a'}{a} = \frac{b'}{b}$, and the result follows.
\end{proof}

There is a well-known analogue for an annulus (again, see \cite{ahlfors2010conformal}, \cite{gardiner2000quasiconformal}, or \cite{keen2007hyperbolic}). This states that if there is a conformal map between two annuli then the ratios of their inner and outer radii must be the same, and that the map must be either a linear map or an inversion. Two standard proofs are well-known which parallel the ones for the rectangle, one using extremal length and the other using Schwarz reflection. We will offer here a third using Brownian motion, and will note that our proof yields a more general result which applies to analytic functions which are not necessarily univalent. To state this generalisation, we need the definition of a proper map.

\begin{defn}
\label{proper}
 We say that an analytic function $f:U\rightarrow W$ is proper if for any compact set $K$, $f^{-1}(K)$ is also compact. 
\end{defn}

Definition \ref{proper} extends to any continuous function between two topological spaces \cite{rudin2012function}. The interpretation of properness is the following : If $(z_{n})_{n}$ is a sequence of $U$ that converges to $\partial U$, i.e the complement of any compact set of $U$ contains all but a finite number of $(z_{n})_n$, then $(f(z_{n}))_{n}$ converges necessarily to $\partial W$. Often, we express that as $f$ maps boundary to boundary. Note that conformal maps are automatically proper, since $f^{-1}$ is itself conformal.

We will use the following notation for an annulus 
\[
\begin{alignedat}{1}A_{h,r} & :=\{z\mid r<\vert z\vert<R\}.
\end{alignedat}
\]

%An analytic function from a domain $U$ to $V$ is ${\it proper}$ if $f^{-1}(K)$ is compact whenever $K \subseteq V$ is compact. Note that conformal maps are automatically proper, since $f^{-1}$ is itself conformal. 

Our generalisation is as follows.

\begin{thm}
\label{thm:annulusequiv} Suppose $f:A_{r,R}\rightarrow A_{r',R'}$ is
a proper analytic function. Then there is a positive integer $n$ such that $\frac{R^n}{r^n} = \frac{R'}{r'}$, and $f$ is of the form $f(z)=\xi z^{n}$ or $f(z)= \frac{\xi}{z^{n}}$ for some constant $\xi$.
\end{thm}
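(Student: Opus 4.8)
The plan is to mimic the rectangle proof by combining the conformal invariance principle (Theorem~\ref{Levy}) with the explicit harmonic measure of the annulus. The key harmonic function is $z \mapsto \log|z|$: solving the Dirichlet problem on $A_{r,R}$ with boundary data $1$ on the outer circle and $0$ on the inner circle gives, via the identification of harmonic measure with the Brownian exit distribution,
$$
P_z\!\left(\,|B_{\tau(A_{r,R})}| = R\,\right) = \frac{\log|z| - \log r}{\log R - \log r}, \qquad z \in A_{r,R}.
$$
The idea is to transport this identity through $f$ and read off that $\log|f(z)|$ must be an affine function of $\log|z|$, from which the explicit form of $f$ and the radius relation both follow.

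First I would pin down the boundary behaviour forced by properness. Since $f$ omits the value $0$, the cluster set of $f$ along the outer circle $\{|z|=R\}$ is, by Definition~\ref{proper}, contained in $\partial A_{r',R'}$; writing it as a nested intersection of closures of images of thin annular collars exhibits it as connected and compact, so it lies in a \emph{single} boundary circle. The same holds along $\{|z|=r\}$, and since $f$ is an open map the two circles cannot collapse onto the same component. This yields exactly two cases: either $|f(z)| \to R'$ as $|z| \to R$ and $|f(z)| \to r'$ as $|z| \to r$, or the reverse. I will treat the first case, the second being identical after composing $f$ with the inversion $w \mapsto r'R'/w$ of $A_{r',R'}$, which swaps the two boundary circles and converts $\xi z^{n}$ into $\xi'/z^{n}$.

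Next I would invoke Theorem~\ref{Levy}. Starting a Brownian motion $B_t$ at $z \in A_{r,R}$, the time-changed process $W_s = f(B_{\sigma^{-1}(s)})$ is a Brownian motion from $f(z)$, and by properness $W$ approaches $\partial A_{r',R'}$ exactly as $B$ approaches $\partial A_{r,R}$. Hence $W$ exits $A_{r',R'}$ at the transformed exit time, and the boundary correspondence just established shows $W$ exits through $\{|w|=R'\}$ precisely when $B$ exits through $\{|z|=R\}$ (indeed $|B_t|\to R$ forces $|f(B_t)|\to R'$). Equating the two harmonic-measure expressions gives
$$
\frac{\log|z| - \log r}{\log R - \log r} = \frac{\log|f(z)| - \log r'}{\log R' - \log r'},
$$
so $\log|f(z)| = a\log|z| + b$ with $a = \log(R'/r')/\log(R/r) > 0$. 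As $f$ is non-vanishing, $\log f$ is analytic on any simply connected subdomain and its real part agrees with that of $a\log z$ up to the constant $b$; an analytic function with constant real part is constant, so $f(z) = \xi z^{a}$ there.

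It remains to force $a$ to be an integer. From $f(z) = \xi z^{a}$ locally we get $f'(z)/f(z) = a/z$, an identity of single-valued analytic functions that then holds throughout $A_{r,R}$ by the identity theorem, whence
$$
a = \frac{1}{2\pi i}\oint_{|z|=\rho} \frac{f'(z)}{f(z)}\,dz,
$$
the winding number of $f$ about the origin, an integer; combined with $a>0$ this gives $a = n \geq 1$. Then $f(z) = \xi z^{n}$, and $a = n$ rewrites as $R^{n}/r^{n} = R'/r'$, with the reversed case producing $f(z) = \xi/z^{n}$. I expect the main obstacle to be the boundary analysis of the second and third paragraphs: making rigorous that properness sends each circle to a single image circle, and that the time-changed process $W$ genuinely exits $A_{r',R'}$ through the corresponding circle at the transformed time, so that no mass is lost through the conformal time change. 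Once that correspondence is secured, the remaining algebra is routine.
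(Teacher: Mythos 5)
Your proposal is correct and follows essentially the same route as the paper: properness pins down the boundary correspondence between the circles, the explicit harmonic measure of the annulus transported through L\'evy's theorem yields $\log|f(z)| = a\log|z|+b$, and $f$ is then identified as $\xi z^{a}$ (after normalizing away the inverted case). The only notable difference is that you establish the integrality of $a$ via the winding number $\frac{1}{2\pi i}\oint_{|z|=\rho} f'(z)/f(z)\,dz$, which is a cleaner justification than the paper's appeal to the fact that $z\mapsto z^{\eta}$ is single-valued on the annulus only for integer $\eta$.
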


\begin{proof}
We will consider a Brownian motion $B_t$ running in $A_{r,R}$, and the image time-changed Brownian motion $B'_t$ running in $A_{r',R'}$. Since $f$ is proper, it is easy to see that $\sigma(\tau(A_{r,R})) = \tau'(A_{r',R'})$, where $\tau'(A_{r',R'})$ denotes the exit time of the Brownian motion $B'_t = f(B_{\sigma^{-1}(t)})$. Fix $a \in A_{r,R}$ and choose $r'',R''$ such that $r'<r''< |f(a)| < R'' < R'$. Since $f$ is proper, $f^{-1}(cl(A_{r'',R''}))$ is compact (here $cl$ denotes the closure), and we may therefore find $\tilde r>r$ and $\tilde R < R$ such that $f^{-1}(cl(A_{r'',R''})) \subseteq A_{\tilde r, \tilde R}$. The sets $f(\{|z|= \tilde r\})$ and $f(\{|z|= \tilde R\})$, must lie in the complement of $A_{r'',R''}$, however their union must separate $f(a)$ from the boundary of $A_{r',R'}$; to see this, note that the set $\{B_t: 0 \leq t \leq \tau(A_{r,R})\}$ must intersect $\{|z|= \tilde r\}) \cup \{|z|= \tilde R\}$, and thus  $\{B'_t: 0 \leq t \leq \tau'(A_{r',R'})\}$ must intersect $f(\{|z|= \tilde r\}) \cup f(\{|z|= \tilde R\})$. By continuity, each of $f(\{|z|= \tilde r\})$ and $f(\{|z|= \tilde R\})$ must lie in just one of $\{|z| < r''\}$ and $\{|z|>R''\}$. We can then, if necessary, replace $f$ by $\frac{Rr}{f}$ and assume that $f(\{|z|= \tilde r\}) \subseteq \{|z| < r''\}$ and $f(\{|z|= \tilde R\}) \subseteq \{|z|>R''\}$. Now let $\tilde r \searrow r$ and $\tilde R \nearrow R'$; as $f$ is proper, the distances from the images $f(\{|z|= \tilde r\})$ and $f(\{|z|= \tilde R\})$ to the boundary of $A_{r',R'}$ must go to 0, and we conclude the following equality of events: 

\begin{gather*}
\{|B_{\tau(A_{r,R})}| = r\} = \{|B'_{\tau'(A_{r',R'})}| = r'\} \\
\{|B_{\tau(A_{r,R})}| = R\} = \{|B'_{\tau'(A_{r',R'})}| = R'\}
\end{gather*}

Fix $\theta \in (0,1)$, and let $W = \{a: P_a(|B_{\tau(A_{r,R})}| = r) = \theta\}$. It is well known that $W$ is the circle $\{\ln |z| = \ln R - \theta \ln \frac{R}{r}\}$; see for example \cite[Thm. 3.17]{morters2010brownian}. By conformal invariance, we must have $f(W) \subseteq W'$, where $W' = \{a: P_a(|B'_{\tau'(A_{r',R'})}| = r')=\theta\} = \{\ln |z| = \ln R' - \theta \ln \frac{R'}{r'}\}$. Rearranging and dividing yields

\[
\frac{\ln|f(z)/R'|}{\ln|z/R|} = \frac{\ln|R'/r|}{\ln|R/r|}.
\]

Setting $\eta:=\frac{\ln|R'/r|}{\ln|R/r|}$, we obtain 
\[
|f(z)|=\frac{R'}{R^{\eta}}|z|^{\eta}.
\]

\if2
But since 
\[
\begin{alignedat}{1}\frac{\rho}{r^{\eta}} & =\frac{\rho}{r^{\frac{\ln(\rho)}{\ln(r)}}}\\
 & =\rho\exp(\ln(r^{-\frac{\ln(\rho)}{\ln(r)}}))\\
 & =\rho\exp(\ln(\frac{1}{\rho}))\\
 & =1,
\end{alignedat}
\]

we have

\[
|f(z)|=\frac{\rho}{r^{\eta}}|z|^{\eta}=|z|^{\eta}.
\]
\fi 

We want to now consider $\frac{f(z)}{z^\eta}$ as an analytic function, however it can not be defined to be analytic on the entire annulus unless $\eta$ is an integer. Since we do not, as of yet, know that it is an integer, we solve this problem by removing the slit $S=\{Im(z) = 0, -R<Re(z)<-r\}$, from the annulus, and choose a branch of the function $z \to z^\eta$ which is analytic on $A_{r,R} \backslash S$. On this domain, the function $\frac{f(z)}{z^\eta}$ is indeed analytic, and as it has constant absolute value it must be a constant by the open mapping theorem for analytic functions. Thus, $f(z)=\xi z^{\eta}$ for some constant $\xi$. However, $f$ is analytic on all of $A_{r,R}$, and the result now follows upon recalling again that $z \to z^{\eta}$ can only be analytic on the annulus if $\eta$ is an integer $n$.
\end{proof}

\begin{rem*}
The ratio of the two dimensions of a rectangle (resp. inner and outer radii of an annulus) is referred to as the {\it modulus} of the rectangle (resp. annulus), and these results imply that it is a conformal invariant. Interestingly, a different probabilistic interpretation of modulus has recently been given in \cite{albin2021convergence}. The approach taken there is quite different from ours, and makes use of discretization. 
\end{rem*}

\section{Brownian Motion and Starlike Domains }

\subsection{Exit of Brownian motion from a Starlike Domain }

The main purpose of this section is to examine a recent result of Karafyllia \cite{karafyllia_2019} on the harmonic measure of starlike domains. We will show how the result can be extended, and will give a proof using Brownian motion. First, we need some definitions. 

\begin{defn}
(Harmonic Measure)

For a simply connected domain $D$ and a subset $A\subset\partial D$,
we define the harmonic measure as 
\[
\omega_D(a,A):=\mathbb{P}_{a}(B_{\tau(D)}\in A)
\]
\end{defn}

{\it Remark:} The harmonic measure can also be defined analytically, i.e. without mentioning Brownian motion. This is the definition used by Karafyllia in \cite{karafyllia_2019}, but the two definitions are equivalent.

\begin{defn}
(\foreignlanguage{australian}{Starlike} Domain)

An open set $D$ on $\mathbb{C}$ is starlike (with respect to $z_0$ if there exists some
$z_{0}\in D:\{(1-t)z_{0}+tz:t\in[0,1]\}\in D$ for all $z\in D$. 
\end{defn}

\begin{defn}
($\triangle-$Starlike Domain)

A domain $U$ is $\triangle-$starlike if, given any $z\in U$,
the horizontal ray 
\[
\triangle(z)=\{w:\Im(w)=\Im(z),\Re(w)\le\Re(z)\}
\]
 lies entirely in $U$.

\begin{figure}[H]
\begin{centering}
\includegraphics[scale=0.2]{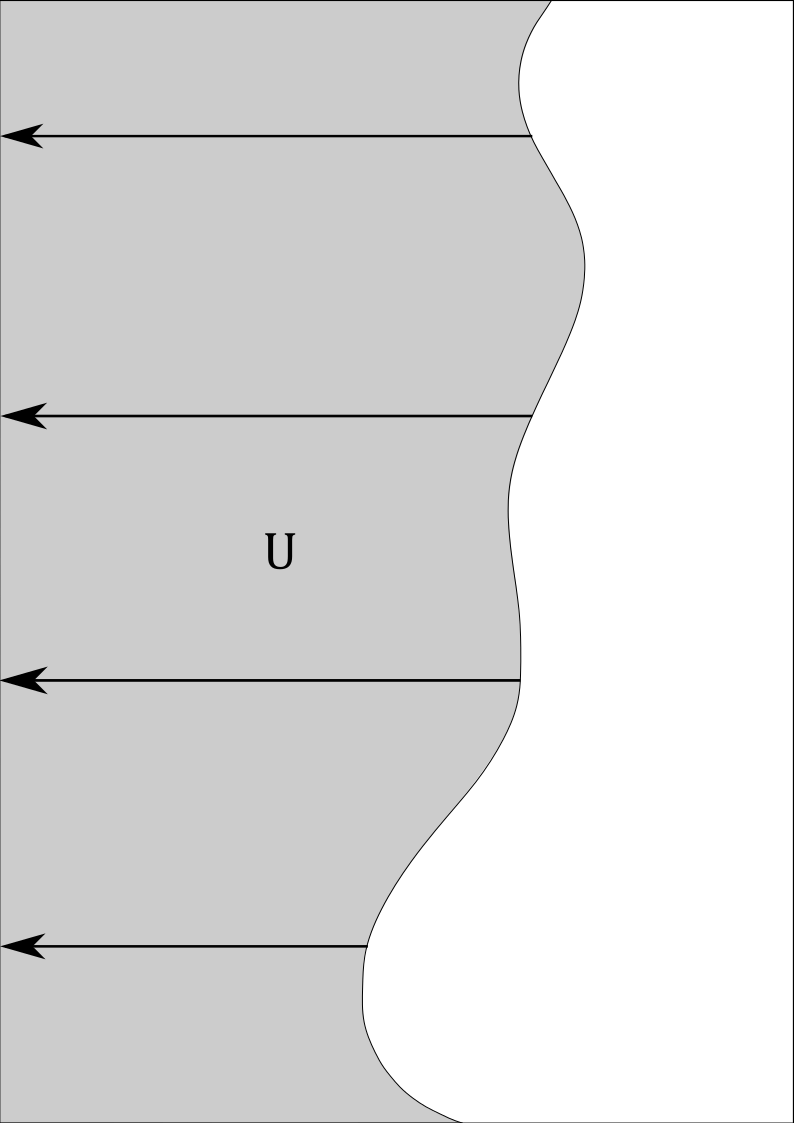}
\par\end{centering}
\caption{All rays going left lie inside U}
\end{figure}
\end{defn}

The following result was proved analytically for starlike domains
by Karafyllia \cite{karafyllia_2019}, in order to answer a question posed by Betsakos in \cite{betsakos2001geometric}. 

\begin{thm}
Let $D$ be a domain in $\mathbb{C}$ which is starlike with respect to $0$. Then for every $R>0$,

\[
\hat{\nu}_{D}(0,R)\le2\nu_{0,D}(R),
\]

where $\nu_{D}(0,R):=\omega_D(0,\partial D\cap\{|z| > R\})$, $\hat{\nu}_{D}(0,r):=\omega_{D\cap \{|z|<R\}}(0,D\cap\{|z|=R\})$.
The constant $2$ is the best possible.
\end{thm}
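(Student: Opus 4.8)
The plan is to translate both quantities into exit probabilities of a single Brownian motion started at $0$ and then reduce the claimed inequality to a pointwise estimate on the circle $C_R := \{|z| = R\}$. Writing $\tau_R$ for the first hitting time of $C_R$ and $\tau_D := \tau(D)$, one reads off from the definitions that $\hat{\nu}_D(0,R) = P_0(\tau_R < \tau_D)$ (Brownian motion leaves $D \cap \{|z|<R\}$ through the circular part of the boundary exactly when it reaches $C_R$ while still inside $D$), while $\nu_D(0,R) = P_0(|B_{\tau_D}| > R)$. Since a path started at $0$ must cross $C_R$ before it can exit $D$ at modulus greater than $R$, the strong Markov property applied at $\tau_R$ gives
\[
\nu_D(0,R) = E_0\big[\mathbf 1_{\{\tau_R < \tau_D\}}\, g(B_{\tau_R})\big], \qquad g(w) := P_w(|B_{\tau_D}| > R).
\]
Because $B_{\tau_R} \in C_R \cap D$ on the event $\{\tau_R<\tau_D\}$, the inequality $\hat{\nu}_D(0,R) \le 2\nu_D(0,R)$ will follow at once provided I can show $g(w) \ge \tfrac12$ for every $w \in C_R \cap D$.

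To prove $g(w) \ge \tfrac12$ I would exploit the reflection $\phi(z) = R^2/\bar z$ in the circle $C_R$. As the composition of the M\"obius map $z \mapsto R^2/z$ with complex conjugation, $\phi$ carries Brownian motion to a time-changed Brownian motion (so it preserves exit distributions), fixes $C_R$ pointwise, and interchanges the inside and outside of the disk. The key geometric input is that starlikeness with respect to $0$ forces $\phi(D^+) \subseteq D^-$, where $D^\pm := D \cap \{|z| \gtrless R\}$: writing a point of $D^+$ as $se^{i\theta}$ with $R < s < \rho(\theta)$ (here $\rho(\theta)$ is the radial extent of $D$ in direction $\theta$, well defined by starlikeness), its image $\phi(se^{i\theta}) = (R^2/s)e^{i\theta}$ has modulus $R^2/s < R < \rho(\theta)$ and hence lies in $D^-$. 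Consequently the symmetrised set $V := D^+ \cup (C_R \cap D) \cup \phi(D^+)$ is contained in $D$, is invariant under $\phi$, and satisfies $\partial V \cap \{|z| > R\} = \partial D \cap \{|z| > R\}$ because $V$ and $D$ coincide on $\{|z|>R\}$. Starting a Brownian motion at $w \in C_R \cap D \subseteq V$, the $\phi$-symmetry of $V$ together with the fact that $w$ is fixed by $\phi$ shows that exiting $V$ through its outer boundary and through its inner boundary are equally likely, so each has probability $\tfrac12$ (the exceptional event of leaving $V$ exactly on $C_R$ having probability zero). Finally, since $V \subseteq D$ and the outer boundary of $V$ is a subset of $\partial D$, any Brownian path that exits $V$ outward has simultaneously exited $D$ at modulus greater than $R$; hence $g(w) \ge P_w(B \text{ exits } V \text{ outward}) = \tfrac12$, as required.

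I expect the main obstacle to be the measure-zero claim just used, namely that the Brownian motion started at $w$ leaves $V$ exactly on $C_R$ with probability zero. The set $\partial V \cap C_R$ consists of the ``pinch points'' where the lens-shaped region $V$ closes up (the directions $\theta$ with $\rho(\theta) = R$ bounding the arcs of $C_R \cap D$); these are corner points of zero interior angle and so should carry no harmonic measure, but making this rigorous for an arbitrary starlike $D$ will require either a mild regularity hypothesis on $\partial D$ or an exhaustion of $D$ by smoother starlike subdomains, passing to the limit in the inequality. It then remains to show the constant $2$ cannot be improved, and for this I would take $D$ to be a disk $\{|z| < R\}$ together with a thin radial spike protruding past $C_R$: a Brownian motion reaching the base of the spike exits $V$ outward with probability $\tfrac12$, while the probability that a path which first re-enters the disk ever climbs back out along the vanishingly thin spike tends to $0$, so that $g \to \tfrac12$ and $\hat{\nu}_D(0,R)/\nu_D(0,R) \to 2$ along this family, exhibiting the sharpness of the bound.
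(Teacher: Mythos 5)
Your argument is sound and reaches the result by a genuinely different route from the paper. The paper does not prove the circle version directly: it transplants the statement via the exponential map to a half-plane/line version for $\triangle$-starlike domains and then runs a reflection \emph{coupling} across the vertical line $\{\Re(z)=r\}$, using the inclusion of exit events $\{\Re(\hat B_{\tau(U)})<0\}\subseteq\{\Re(B_{\tau(U)})>0\}$ on $\{\tau(U)>\tau_0\}$ to get the factor $\tfrac12$. You instead work directly with the circle $\{|z|=R\}$, condition at the hitting time $\tau_R$, and reduce everything to the pointwise bound $g(w)\ge\tfrac12$ on $C_R\cap D$, which you obtain by building the $\phi$-symmetric subdomain $V=D^+\cup(C_R\cap D)\cup\phi(D^+)$ with $\phi(z)=R^2/\bar z$ and invoking symmetry of the exit distribution from $V$. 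This is exactly the alternative the paper only gestures at in its closing remark (``one may obtain Karafyllia's result directly by reflecting over the circle of radius $R$''), so you have supplied the argument the authors chose not to write out. Your route has the advantage of staying in the original domain and of isolating the geometric content in the single inclusion $\phi(D^+)\subseteq D^-$; the paper's route has the advantage of proving the more general $\triangle$-starlike statement in one stroke. Your sharpness discussion (disk with a thinning radial spike) is consistent with the paper's remark that for starlike domains the constant $2$ is optimal but never attained.

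The one gap is the one you flag yourself: that $\omega_V(w,\partial V\cap C_R)=0$. This can be closed without any regularity hypothesis or exhaustion. Write $\rho(\theta)=\sup\{s>0: se^{i\theta}\in D\}$ and $A=\{\theta:\rho(\theta)>R\}$, which is open by lower semicontinuity of $\rho$. Every point of $V$ has modulus in $\bigl(R^2/\rho(\theta),\rho(\theta)\bigr)$ and argument in $A$, so $V$ is contained in the disjoint union of the open sectors $\{se^{i\theta}: s>0,\ \theta\in(\alpha_i,\beta_i)\}$ over the component arcs of $A$. Hence the connected component $L$ of $V$ containing $w=Re^{i\theta_0}$ lies in a single such sector, say over $(\alpha,\beta)\ni\theta_0$; moreover the whole arc $\{Re^{i\theta}:\theta\in(\alpha,\beta)\}$ is a connected subset of $V$ through $w$, so it lies in $L$. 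Therefore $\partial L\cap C_R\subseteq\{Re^{i\alpha},Re^{i\beta}\}$, a finite (hence polar) set, and the Brownian motion started at $w$ exits $L$ on $C_R$ with probability zero. Running your symmetry argument in $L$ (which is still $\phi$-invariant and still has its outer boundary inside $\partial D\cap\{|z|>R\}$) gives $g(w)\ge\tfrac12$ exactly as you intended. It is worth noting that the paper's own proof contains the analogous step --- the assertion that $\{\tau(U)>\tau_0\}$ splits, up to a null set, into the left-exit and right-exit events --- stated without justification, so on this point your write-up is if anything the more careful of the two.
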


Now a generalisation
is offered and proved using the reflection principle of Brownian motion. Our result pertains to $\Delta$-starlike domains, and to see that it really is a generalisation of Karafyllia's result we note that any domain starlike with respect to 0 corresponds to a $\triangle-$starlike domain with periodic boundary of period $2\pi i$ through the complex exponential map. The image of this map will be the starlike domain with the origin removed, but the removal of the origin is irrelevant to the Brownian motion since planar Brownian motion doesn't see points (see \cite{markowsky2020planar}), provided that the Brownian motion doesn't start at the origin. To include the origin, simply take a point near the origin, conclude the theorem for that starting point, and then let the point approach the origin. To summarize, our result can be translated to Karafyllia's by the exponential map. The following is our result.

\begin{thm}
Let U be a $\triangle-$starlike Domain in $\mathbb{C}$.
Then for every $r\in\mathbb{R},$ $a\in \{\Re(z) < r\}$, we have

\[
\hat{\nu}_{U}(a,r)\le2\nu_{U}(a,r),
\]

where $\nu_{U}(a,r):=\omega_U(a,\partial U\cap\{\Re(z)> r\})$ and
$\hat{\nu}_{U}(a,r):=\omega_{U \cap \{\Re(z)<r\}}(a,U\cap\{\Re(z)=r\})$. The constant
2 is the best possible.
\end{thm}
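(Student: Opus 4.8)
The plan is to reduce the claimed inequality to a single pointwise estimate on harmonic measure \emph{from the dividing line}, and then to prove that estimate by a symmetrisation/reflection argument built on the $\triangle$-starlikeness of $U$. Throughout I work under the standing hypothesis, implicit in this circle of results, that Brownian motion leaves $U$ almost surely (so that the harmonic measures are genuine probabilities); degenerate cases such as $U=\mathbb{C}$, where $\nu_U\equiv 0$, are excluded. Write $L=\{\Re(z)=r\}$, $U_R=U\cap\{\Re(z)>r\}$, $U_L=U\cap\{\Re(z)<r\}$, $I=U\cap L$, and set $h(z)=\mathbb{P}_z(B_{\tau(U)}\in\partial U\cap\{\Re(z)>r\})$, the probability of exiting $U$ to the right.

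The first step is a strong Markov reduction. Starting from $a$ with $\Re(a)<r$, let $T$ be the first time $\Re(B_t)=r$. Since any path exiting $U$ to the right must cross $L$ while still inside $U$, the event defining $\nu_U(a,r)$ is contained in $\{T<\tau(U)\}$, on which $B_T\in I$; conditioning at the stopping time $T$ yields $\nu_U(a,r)=\mathbb{E}_a[\mathbf 1_{\{T<\tau(U)\}}\,h(B_T)]$. On the other hand $\hat\nu_U(a,r)=\mathbb{P}_a(T<\tau(U))$ is exactly the probability of reaching $L$ inside $U$ before leaving $U$. Thus the theorem follows once I establish the pointwise bound $h(p)\ge\tfrac12$ for every $p\in I$, since then $\nu_U(a,r)\ge\tfrac12\,\mathbb{P}_a(T<\tau(U))=\tfrac12\,\hat\nu_U(a,r)$.

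The heart of the matter is $h(p)\ge\tfrac12$, and this is where $\triangle$-starlikeness enters through reflection. Let $\rho$ be reflection across $L$, so $\rho(x+iy)=(2r-x)+iy$. If $w\in U_R$ then $\rho(w)$ lies at the same height as $w$ but strictly to its left, whence $\rho(w)\in U$ by $\triangle$-starlikeness; therefore $\rho(U_R)\subseteq U_L$. Consequently the symmetrised set $V:=U_R\cup I\cup\rho(U_R)$ is open, invariant under $\rho$, contained in $U$, and — crucially — agrees with $U$ to the right of $L$, so that $\partial V\cap\{\Re(z)>r\}=\partial U\cap\{\Re(z)>r\}$. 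Now fix $p\in I$. Since $\rho$ preserves $V$ and fixes $p$, the law of Brownian motion from $p$ in $V$ is $\rho$-invariant, hence its exit distribution is symmetric about $L$; as $\partial V\cap L$ consists only of the polar endpoints of the intervals of $I$, Brownian motion exits $V$ to the right and to the left with equal probability $\tfrac12$ (this is the reflection principle). Finally, because $V\subseteq U$ shares its right boundary with $U$, a path exiting $V$ through $\partial V\cap\{\Re(z)>r\}$ exits $U$ at the very same point, so $h(p)\ge\mathbb{P}_p(\text{exit }V\text{ on the right})=\tfrac12$.

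The main obstacle is precisely the almost-sure exit issue in this last step: were Brownian motion from $p$ able to remain in $V$ forever with positive probability, symmetry would only give $h(p)\ge\tfrac12(1-\mathbb{P}_p(\tau(V)=\infty))$, and the inequality genuinely fails in degenerate cases such as $U=\mathbb{C}$. I would therefore isolate the hypothesis that Brownian motion exits $U$ (equivalently $V$) almost surely — automatic once $U\cap\{\Re(z)>r\}$ is bounded, which is exactly what a bounded (or hyperbolic) starlike domain produces under the exponential map after passing to the $2\pi i$-periodic cylinder. The residual points are routine: $\partial V\cap L$ is polar and contributes nothing, and corners where $L$ meets $\partial U$ are handled by the usual measure-zero considerations. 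For sharpness I would exhibit a family of $\triangle$-starlike domains in which the first crossing of $L$ occurs where the domain is nearly symmetric, forcing $h(p)\downarrow\tfrac12$ with vanishing loss in the comparison $V\subseteq U$, so that $\hat\nu_U(a,r)/\nu_U(a,r)\to 2$; equivalently, one transports Karafyllia's extremal starlike example through the exponential map.
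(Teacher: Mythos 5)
Your argument is correct in substance and rests on the same underlying idea as the paper's proof --- reflection across the line $L=\{\Re(z)=r\}$, with $\triangle$-starlikeness guaranteeing that the reflection of $U\cap\{\Re(z)>r\}$ lands inside $U$ --- but you implement it differently. The paper works pathwise: it runs the Brownian motion to $\tau_0$ (your $T$), then couples it with the path reflected across $L$ thereafter, and uses starlikeness to show that whenever the reflected path exits to the left the original exits to the right, giving $P(\text{exit right},\,\tau(U)>\tau_0)\ge\tfrac12 P(\tau(U)>\tau_0)$ directly. You instead symmetrise the domain, forming $V=U_R\cup I\cup\rho(U_R)\subseteq U$, and combine the exact $\rho$-symmetry of harmonic measure in $V$ with domain monotonicity (the two domains share their boundary to the right of $L$) to get the pointwise bound $h(p)\ge\tfrac12$ on $I$, then integrate via the strong Markov property. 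Your route avoids the bookkeeping of which of the two coupled processes exits first, at the price of having to check that $V$ is open and that exits of $V$ on $L$ itself are negligible. On that last point your justification is slightly off: $\partial V\cap L$ equals $\overline{I}\setminus I$, which is not in general a countable set of interval endpoints and need not be polar (take $U\cap L$ to be the complement of a fat Cantor set). This is, however, the same soft spot the paper glosses over when it asserts that $\{\tau(U)>\tau_0\}$ splits into exit-left and exit-right ``up to a set of probability 0,'' so you are no worse off than the published argument; a careful treatment would either rule out exits on $L$ by a separate argument or count them toward the favourable side. Your isolation of the almost-sure-exit hypothesis (excluding $U=\mathbb{C}$) is a genuine point the paper leaves implicit, and your sharpness discussion, like the paper's (which uses a half-plane with horizontal boundary), is a sketch rather than a proof.
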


{\it Remark:} In addition to transferring the result to $\Delta$-starlike domains, we also show that the harmonic measure can be taken with respect to any point in $a \in \{|z|< R\}$, rather than just $0$. That is, we prove

\[
\hat{\nu}_{D}(a,R)\le2\nu_{a,D}(R),
\]

for any $a \in \{|z|< R\}$. In terms of the Brownian motion interpretation of harmonic measure, this means that the Brownian motion can start at any $a \in \{|z|< R\}$ and the result still holds.
\begin{proof}
It clearly suffices to assume $r=0$. Let $\mathbb{I}:=\{z:\Re(z)=0\}$, $\tau(U):=\inf\{t:B_{t}\notin U\}$,
$\tau_{0}:=\inf\{t:B_{t}\notin\{\Re(B_{t})<0\}\cap U\}$. Define the
coupling 

\[
\hat{B}_{t}=\begin{cases}
B_{t} & t\le\tau_{0}\\
-\bar{B}_{t} & t>\tau_{0}
\end{cases}.
\]

By the reflection principle for planar Brownian motion, $\hat{B}_{t}$
is a Brownian motion. It is evident that, as a consequence
of $\triangle-$starlikeness,
\[
\{\Re(\hat B_{\tau(U)})<0, \tau(U)>\tau_0\}\subseteq\{\Re(B_{\tau(U)})>0, \tau(U)>\tau_0\}.
\]

Figure \ref{ref_pic} captures this idea, that once the Brownian motion hits $\mathbb{I}$ it will be at least as likely to exit to the right of this line than to the left.

\begin{figure}[H]
\begin{centering}
\includegraphics[scale=0.32]{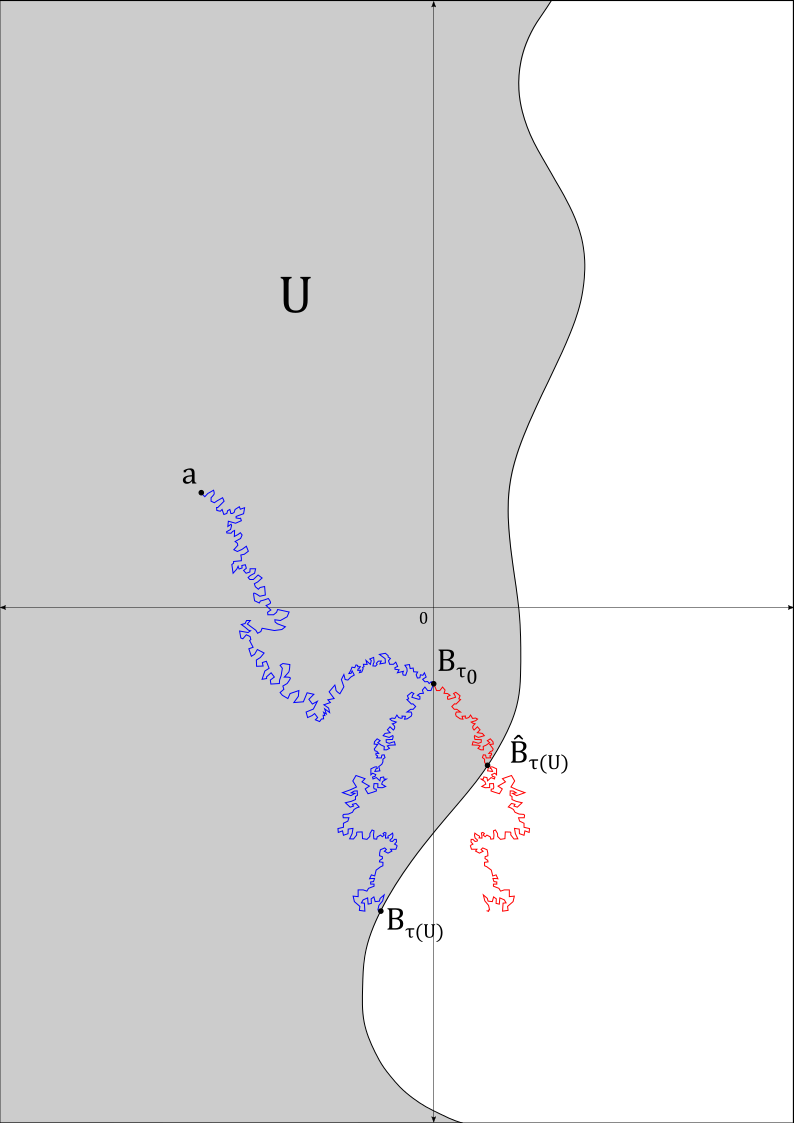}\includegraphics[scale=0.32]{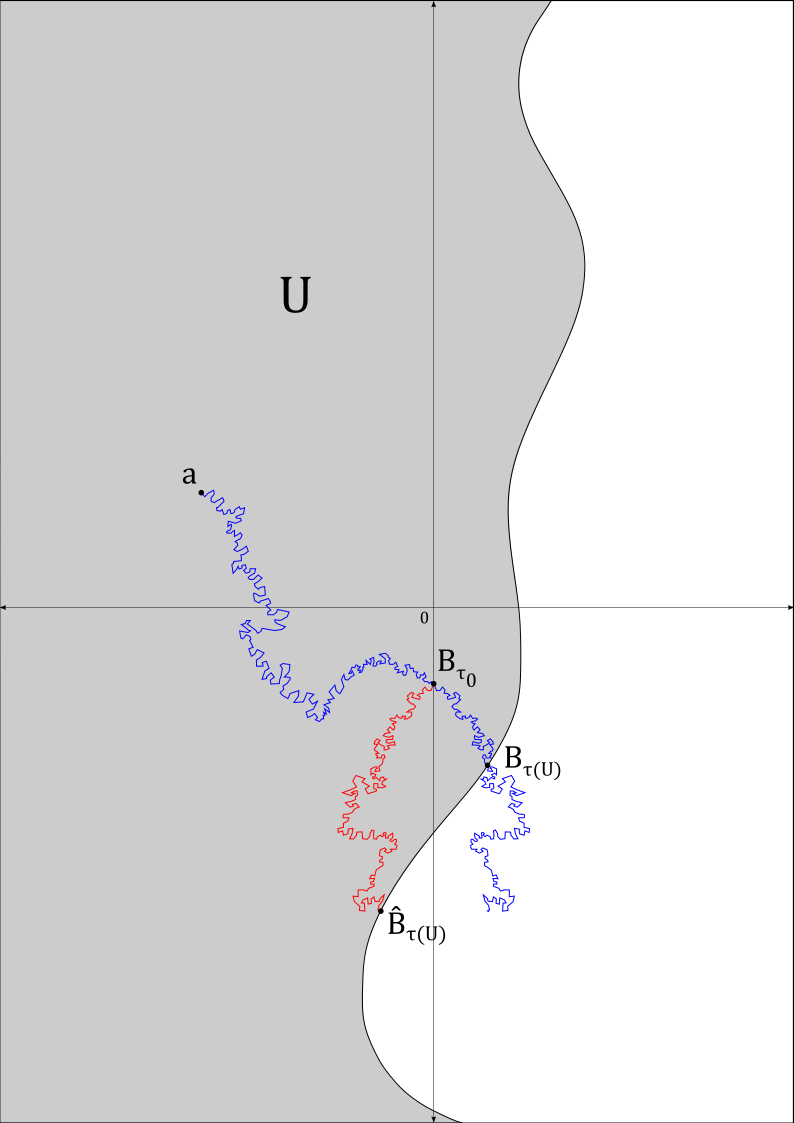}
\par\end{centering}
\begin{centering}
\caption{On the left, if $B_{\tau(U)}$ is left of $\mathbb{I}$, $\hat{B}_{\tau(U)}$
is guaranteed to lie on the right of $\mathbb{I}$. On the other hand if $B_{\tau(U)}$ lies to the right
of $\mathbb{I}$, then $\hat{B}_{t}$ may have lived past that time, and may or may not exit to the left of $\mathbb{I}$. } \label{ref_pic}
\par\end{centering}
\end{figure}

Note also that, up to a set of probability 0, $\{\tau(U)> \tau_0\}$ is the disjoint union of $\{\Re(B_{\tau(U)})<0, \tau(U)>\tau_0\}$ and $\{\Re(B_{\tau(U)})>0, \tau(U)>\tau_0\}$. We see that

\[
P(\Re(B_{\tau(U)})>0, \tau(U)>\tau_0) \geq \frac{1}{2}P(\tau(U)> \tau_0).
\]

The result now follows upon noting that 

$$P(\Re(B_{\tau(U)})>0, \tau(U)>\tau_0)= P(\Re(B_{\tau(U)})>0) = \nu_{U}(a,0)$$ 

and 
$$P(\tau(U)> \tau_0) = \hat \nu_{U}(a,0)$$

\end{proof}
\begin{rem*}
%The result given in Karafyllia \cite{karafyllia_2019} may be obtained
%through the use of an exponential mapping. In this case, we impose
%on $U$ the added property of being periodic over the imaginary axis
%with period $2\pi i$. This however creates a problematic initial
%position starting from 0 in the circular image domain as no point
%maps to 0. The problem however may be circumvented by instead considering
%an initial position in $U$ with real part going to negative infinity.

As an alternative to the proof given here, one may obtain Karafyllia's result directly by reflecting over the circle of radius $R$. Such a reflection does preserve Brownian motion but also introduces a time change, however that is not relevant for our problem since we are only interested in the distribution of the Brownian motion at time $\tau$, rather than the distribution of $\tau$ itself. The constant $2$ in our result is optimal, and occurs when the $\triangle-$starlike
domain is a horizontal strip or half-plane with horizontal boundary. We note that the constant 2 is also optimal in Karafyllia's result as well, but in that case the inequality is always strict; that is, the constant 2 is never obtained for a given domain. This is because the starlike domain can never actually equal its reflection over the circle, since $0$ is in the starlike domain but $\infty$ is in its complement.
\end{rem*}

\section{Domains with constraints on the exit moments}

\subsection{Hardy Spaces and $p$-th moments of exit times}

The theory of Hardy spaces provides another point of intersection
between the theory of Brownian motion and complex analysis. Hardy
spaces provide an analytic way of putting a measure on the size of
the domain as seen by Brownian motion through its exit time out of
the domain. Moreover, Hardy spaces play a major role when it comes to the problem of finiteness of the moments of the exit times. To this end, let $f$ be a analytic function on the unit disc and for any $p>0$ and $0\leq r<1$ set 

$$N_{p,r}(f):=\left\{ \frac{1}{2\pi}\int_{0}^{2\pi}|f(re^{i \theta})|^{p}d\theta\right\} ^{\frac{1}{p}}.$$
The quantity $N_{p,r}(f)$ can be interpreted as the $L^{p}$ norm \footnote{The word norm is an abuse of language as $N_{p,r}(f)$ is not a true norm when $p<1$.} of the function $\theta\mapsto f(re^{\theta i})$. It can be shown, using harmonic analysis techniques, that $N_{p,r}(f)$ is non decreasing in terms of $r$ \cite{rudin2012function}. Hence, we are ready now to define the $p^{th}$-Hardy norm.

\begin{defn}
For any analytic function $f$ on the unit disc, the $p^{th}$-Hardy norm of $f$ is defined by $$H_{p}(f):=\sup_{0\leq r<1}N_{p,r}(f)=\sup_{0\leq r<1}\left\{ \frac{1}{2\pi}\int_{0}^{2\pi}|f(re^{\theta i})|^{p}d\theta\right\} ^{\frac{1}{p}}.$$
\end{defn}
The set of analytic functions whose $p^{th}$-Hardy norm is finite is denoted by $\mathcal{H}^{p}$ and called Hardy space (of index $p$). A crucial result about Hardy norms is that, if $H_{p}(f)$ is finite then $f$ has a radial extension to the boundary. More precisely $f^{\ast}(z):=\lim_{r\rightarrow1}f(rz)$ exists a.e for all $z\in\partial\mathbb{D}$ and it belongs to $L^{p}$ as well. More details about the topic can be found in \cite{duren2000theory}. A consequence of Hölder's inequality is the inclusion $\mathcal{H}^{q}\subseteq\mathcal{H}^{p}$ whenever $0<p\leq q$. This leads to the following definition.

\begin{defn}
(Hardy Number)

Given a simply connected domain $W$ and conformal $f:\mathbb{D\rightarrow}W$,
the Hardy number, $H(W)$, of $W$ is defined as 

\[
H(W):=\sup\{p>0 \mid  H_{p}(f)<\infty\}.
\]
\end{defn}

Although the definition is in terms of $f$ but it can be shown that the Hardy number does not depend of the functional mapping $\mathbb{D}$ onto $U$ $f$ and so it is well defined. We will be interested in the Hardy numbers of a particular type of domain, namely Jordan* domains, which we now define.

\begin{defn}
(Jordan curve)

A Jordan curve is a closed curve in $\CC$ which is the homeomorphic image of a circle.
\end{defn}

\begin{defn}
(Jordan domain)

A domain whose boundary is a Jordan curve is called a Jordan domain.
\end{defn}

Note that Jordan domains are automatically simply connected. The question of calculating the Hardy number of a Jordan domain is not interesting, because a Jordan domain is bounded and it follows easily that its Hardy number is infinite. In order to make the question more interesting, we describe another type of domain, the Jordan{*} domains, which were previously considered in \cite{markowsky2015exit}.

\begin{defn}
(Jordan{*} Domain)

A Jordan{*} domain is a domain in $\CC$ which is the image of Jordan domain $U$ under a Mobius transformation which takes a boundary point of $U$ to $\infty$.
\end{defn}

Essentially we are modifying the class of Jordan domains to require $\infty$ to be a boundary point, and the Hardy number of Jordan* domains is no longer trivial. Alternatively, Jordan* domains could be defined to be domains whose boundary contains $\infty$ and is homeomorphic (as a set in the Riemann sphere) to a circle. An important paper which relates the theory of Hardy spaces to the exit time of Brownian motion is that of Burkholder
\cite{BURKHOLDER1977182} and we will implement from it without proof
the following two results.
\begin{thm}
\label{thm:hardyexitequiv}Suppose $f$ is a conformal map taking
the unit disc to a simply connected domain $R$. Then for all $x\in R$,

\[
f\in H^{2p}\iff\mathbb{E}_{x}[\tau(R)^{p}]<\infty\iff\mathbb{E}_{x}[|B_{\tau(R)}|^{2p}]<\infty.
\]
\end{thm}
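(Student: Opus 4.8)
The plan is to prove the theorem by transporting everything to the unit disc via Lévy's Theorem (Theorem \ref{Levy}), and then to separate the two equivalences: one is essentially a fact about harmonic measure, while the other is a purely martingale-theoretic statement. Let $W_t$ be a planar Brownian motion in $\mathbb{D}$ started at $w_0=f^{-1}(x)$ and run until its exit time $\tau(\mathbb{D})$. By Theorem \ref{Levy}, $f(W_{\sigma^{-1}(t)})$ is a Brownian motion in $R$, and since $f$ is a conformal bijection onto $R$ it exits $R$ exactly when $W$ exits $\mathbb{D}$. Consequently
\[
\tau(R)=\int_{0}^{\tau(\mathbb{D})}|f'(W_s)|^{2}\,ds,\qquad B_{\tau(R)}=f^{*}(W_{\tau(\mathbb{D})}),
\]
where $f^{*}$ denotes the a.e.-defined radial boundary values of $f$ and $W_{\tau(\mathbb{D})}$ is distributed on $\partial\mathbb{D}$ according to harmonic measure from $w_0$.

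For the equivalence $f\in H^{2p}\iff\mathbb{E}_x[|B_{\tau(R)}|^{2p}]<\infty$, the exit distribution of $W$ from $\mathbb{D}$ is the Poisson kernel $P(w_0,\cdot)$, so
\[
\mathbb{E}_x[|B_{\tau(R)}|^{2p}]=\frac{1}{2\pi}\int_{0}^{2\pi}|f^{*}(e^{i\theta})|^{2p}\,P(w_0,\theta)\,d\theta .
\]
When $w_0=0$ (i.e.\ $x=f(0)$) the kernel is identically $1$, and the right-hand side equals $\lim_{r\to1}N_{2p,r}(f)^{2p}=H_{2p}(f)^{2p}$, the identification following from the monotonicity of $N_{2p,r}$ in $r$ together with monotone convergence. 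For a general interior point the Poisson kernel is bounded above and below by positive constants on $\partial\mathbb{D}$, so finiteness of the integral, hence of $\mathbb{E}_x[|B_{\tau(R)}|^{2p}]$, is independent of $x$; this disposes of the ``for all $x$'' clause for this pair of conditions.

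The remaining equivalence $\mathbb{E}_x[\tau(R)^p]<\infty\iff\mathbb{E}_x[|B_{\tau(R)}|^{2p}]<\infty$ is the probabilistic heart of the matter, and here I would work directly with the Brownian motion $B_t$ in $R$ and its maximal function $B^{*}:=\sup_{0\le t\le\tau(R)}|B_t|$. Writing $B_t=(X_t,Y_t)$, each coordinate is a one-dimensional martingale with quadratic variation $t$, so the Burkholder--Davis--Gundy inequalities give, for every $p>0$,
\[
\mathbb{E}_x\big[(B^{*})^{2p}\big]\asymp\mathbb{E}_x\big[\tau(R)^{p}\big],
\]
with constants depending only on $p$ (one compares $B^{*}$ with $\sup|X_t|+\sup|Y_t|$). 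Since $|B_{\tau(R)}|\le B^{*}$, one direction is immediate: $\mathbb{E}_x[\tau(R)^{p}]<\infty$ forces $\mathbb{E}_x[|B_{\tau(R)}|^{2p}]<\infty$. The content is the reverse maximal estimate $\mathbb{E}_x[(B^{*})^{2p}]\lesssim\mathbb{E}_x[|B_{\tau(R)}|^{2p}]$, controlling the whole path by its endpoint.

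The hard part, and the only place where more than soft arguments are needed, is exactly this reverse estimate. For $2p>1$ it follows quickly, because $|B_t|$ is a nonnegative submartingale (the modulus is subharmonic), so Doob's $L^{2p}$ maximal inequality yields $\mathbb{E}_x[(B^{*})^{2p}]\le(\tfrac{2p}{2p-1})^{2p}\,\mathbb{E}_x[|B_{\tau(R)}|^{2p}]$. For $2p\le1$ Doob's inequality is unavailable, and one must instead exploit the special structure of $B_t$ as a conformal (analytic) martingale: the comparison $\|B^{*}\|_{2p}\asymp\|B_{\tau(R)}\|_{2p}$ persists for \emph{all} $p>0$ for such martingales, but its proof requires a good-$\lambda$ inequality in the spirit of Burkholder--Gundy, using that the two coordinates of a conformal martingale are orthogonal with identical quadratic variations. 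I would regard establishing this good-$\lambda$ inequality in the small-$p$ regime as the main obstacle; the remainder is a matter of assembling the conformal change of variables, the Poisson-kernel computation, and the BDG inequalities recalled above, and then chaining the three finiteness criteria, each shown to be independent of the base point, to obtain the equivalences for all $x\in R$.
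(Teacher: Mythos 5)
First, a point of comparison: the paper does not prove this theorem at all --- it explicitly imports it from Burkholder ``without proof,'' so there is no in-paper argument to measure yours against. Judged on its own terms, your outline follows exactly Burkholder's original route (conformal transport to the disc, Poisson-kernel identification of the Hardy norm with the law of $B_{\tau(R)}$, and the two-sided maximal comparisons $\mathbb{E}[(B^{*})^{2p}]\asymp\mathbb{E}[\tau(R)^{p}]$ via BDG and $\mathbb{E}[(B^{*})^{2p}]\asymp\mathbb{E}[|B_{\tau(R)}|^{2p}]$ via Burkholder--Gundy--Silverstein), and you correctly locate the crux. But as written it is a sketch, not a proof: the reverse maximal estimate $\mathbb{E}_x[(B^{*})^{2p}]\le C_p\,\mathbb{E}_x[|B_{\tau(R)}|^{2p}]$ in the range $2p\le 1$ is precisely the content of the theorem in the hard regime (recall $p<1/4$ is where all the interesting behaviour of Hardy numbers of simply connected domains lives), and you name the good-$\lambda$ inequality that would deliver it without stating or proving it. Until that inequality is actually established --- using conformality of the martingale $(X_t,Y_t)$, not just the submartingale property of $|B_t|$ --- the chain of equivalences is open exactly where it matters.

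There is also a genuine error in the soft-looking step. You claim that $\frac{1}{2\pi}\int_0^{2\pi}|f^{*}(e^{i\theta})|^{2p}\,d\theta=\lim_{r\to1}N_{2p,r}(f)^{2p}=H_{2p}(f)^{2p}$ ``by monotonicity of $N_{2p,r}$ together with monotone convergence.'' Monotone convergence does not apply: $|f(re^{i\theta})|$ is not monotone in $r$ for fixed $\theta$, and the asserted identity is false for general analytic functions --- $f(z)=\exp\left(\frac{1+z}{1-z}\right)$ has $|f^{*}|=1$ a.e.\ on $\partial\mathbb{D}$ yet lies in no $H^{q}$. Fatou gives the inequality $\frac{1}{2\pi}\int|f^{*}|^{2p}\,d\theta\le H_{2p}(f)^{2p}$ for free, but the converse direction (boundary values in $L^{2p}$ implies $f\in H^{2p}$) genuinely uses univalence: either via the fact that conformal maps lie in $H^{q}$ for $q<1/2$ and hence in the Smirnov class, where $f^{*}\in L^{2p}$ does imply $f\in H^{2p}$, or --- more in the spirit of your argument --- via the bound $N_{2p,r}(f)^{2p}=\mathbb{E}_{f(0)}[|B_{\sigma(\sigma_r)}|^{2p}]\le\mathbb{E}_{f(0)}[(B^{*})^{2p}]$, which routes this step through the very same maximal inequality you have deferred. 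So both equivalences ultimately hinge on the one estimate you have not supplied.
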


The $p$-Hardy norm can be seen as the expectation of the $p^{th}$
moment of $f(B_{T(\mathbb{D})})$ as $B_{T(\mathbb{D})}$ is uniformly
distributed on the circle. Thus, the equivalence of the finiteness
of the $p^{th}$ moment of the stopped Brownian motion and the $2p^{th}$
Hardy norm of a conformal map can be seen as a consequence of the
conformal invariance of Brownian motion. Another crucial fact about Theorem \ref{thm:hardyexitequiv} is that the finiteness of $\mathbb{E}_{x}[\tau(R)^{p}]$ and $\mathbb{E}_{x}[|B_{\tau(R)}|^{2p}]$ does not depend on the starting point $x$ \cite{BURKHOLDER1977182}. Burkholder was also able to give
the following general result about the $p^{th}$ moment of the exit time of a
Brownian motion exiting any simply connected domain, which is a consequence of the corresponding result for Hardy norms (see \cite{duren2000theory}).

\begin{thm}
\label{thm:simplyconnecthardy}If $R$ is simply connected and not
the whole complex plane then, for all $z\in R$, $\mathbb{E}_{z}[\tau(R)^{p}]<\infty$
for any $p<\frac{1}{4}$.
\end{thm}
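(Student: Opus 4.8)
The plan is to reduce the statement, via the equivalences in Theorem~\ref{thm:hardyexitequiv}, to a tail estimate for the exit position $B_{\tau(R)}$, and then to establish that estimate using Beurling's projection theorem. Since Theorem~\ref{thm:hardyexitequiv} guarantees that finiteness of $\mathbb{E}_z[\tau(R)^p]$ does not depend on the starting point, it suffices to prove it for one convenient $z$, and the conclusion for all $z$ then follows automatically. Moreover, by the same theorem $\mathbb{E}_z[\tau(R)^p]<\infty$ holds if and only if $\mathbb{E}_z[|B_{\tau(R)}|^{2p}]<\infty$. Writing $q=2p$, the theorem is therefore equivalent to the claim that $\mathbb{E}_z\big[|B_{\tau(R)}|^{q}\big]<\infty$ for every $q<\tfrac12$, and it is this moment of the exit \emph{position} that I would work with.

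First I would pass to a tail bound through the layer-cake formula,
\[
\mathbb{E}_z\big[|B_{\tau(R)}|^{q}\big] = q\int_0^\infty s^{q-1}\,\mathbb{P}_z\big(|B_{\tau(R)}| \ge s\big)\,ds,
\]
so that everything rests on the rate of decay of $\mathbb{P}_z(|B_{\tau(R)}| \ge s)$. The event $\{|B_{\tau(R)}| \ge s\}$ forces the Brownian path to reach the circle $\{|w|=s\}$ before exiting $R$, so it is enough to bound the probability of this reaching event.

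The geometric heart of the argument is the harmonic-measure estimate supplied by Beurling's projection theorem, applied to the simply connected domain $R$: the probability of reaching $\{|w|=s\}$ before exit is bounded by $C\sqrt{d/s}$ for all large $s$, where $d=\mathrm{dist}(z,\partial R)$ and $C$ is absolute. The exponent $-\tfrac12$ here is exactly the extremal rate, realised by a radial slit emanating from a boundary point, which is available precisely because $R\ne\mathbb{C}$ forces $\partial R\ne\emptyset$. Inserting this into the layer-cake integral, the tail contributes $\int^{\infty}s^{q-1}s^{-1/2}\,ds=\int^{\infty}s^{q-3/2}\,ds$, which converges exactly when $q<\tfrac12$. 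Hence $\mathbb{E}_z[|B_{\tau(R)}|^{q}]<\infty$ for $q<\tfrac12$, and translating back through Theorem~\ref{thm:hardyexitequiv} yields $\mathbb{E}_z[\tau(R)^p]<\infty$ for all $p<\tfrac14$.

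The main obstacle is the tail estimate itself. The crude pointwise growth bound coming from the Koebe distortion theorem (namely $|f(z)|\lesssim (1-|z|)^{-2}$ for the Riemann map $f:\mathbb{D}\to R$) is far too weak, since it produces no finite moment at all; what is needed is the gain coming from averaging, which Beurling's theorem provides. I would also record the classical alternative indicated by the paper's reference to \cite{duren2000theory}: the equivalence $\mathbb{E}_z[\tau(R)^p]<\infty\iff f\in H^{2p}$ reduces the claim to Prawitz's theorem on the integral means of univalent functions, which produces the same threshold $2p<\tfrac12$. Finally, the exponent $\tfrac14$ is sharp, since the Koebe map lies in $H^{q}$ precisely for $q<\tfrac12$.
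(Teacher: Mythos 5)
The paper offers no proof of this theorem: it is one of the results imported from Burkholder \cite{BURKHOLDER1977182} without proof, with the remark that it follows from the corresponding fact for Hardy norms (Prawitz's theorem that every univalent function on the disc lies in $\mathcal{H}^{q}$ for all $q<\frac{1}{2}$, via Theorem~\ref{thm:hardyexitequiv}). Your argument is therefore not a variant of the paper's proof but a genuine self-contained derivation, and it is correct: the reduction through Theorem~\ref{thm:hardyexitequiv} to $\mathbb{E}_{z}\bigl[|B_{\tau(R)}|^{q}\bigr]$ with $q=2p$, the layer-cake formula, and the Beurling tail bound $\mathbb{P}_{z}(|B_{\tau(R)}|\ge s)\le C\sqrt{d/s}$ combine to give convergence of $\int^{\infty}s^{q-3/2}\,ds$ exactly for $q<\frac{1}{2}$, i.e.\ $p<\frac{1}{4}$, and the independence of the starting point is supplied by the paper's statement of Theorem~\ref{thm:hardyexitequiv}. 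One point deserves more care than you give it: the hypothesis that earns the exponent $-\frac{1}{2}$ in Beurling's projection theorem is not merely $\partial R\ne\emptyset$ but simple connectivity. To apply the theorem you need a connected subset of the complement whose circular projection about $z$ covers the entire interval $[d,s]$; this holds because $R$ being simply connected and contained in $\CC$ forces its complement in the Riemann sphere to be connected and to contain both $\infty$ and a nearest boundary point at distance $d$ from $z$ (if $R$ is bounded the tail vanishes for large $s$ and there is nothing to prove). For a general domain with nonempty boundary the $s^{-1/2}$ decay can fail, so this is where the standing hypothesis is actually consumed. Comparing the two routes: the citation to Prawitz is shorter given the function-theoretic literature, while your argument makes the geometric mechanism visible --- the complement must stretch from distance $d$ all the way to $\infty$ --- and is more in keeping with the probabilistic spirit of the paper; your closing observation that the Koebe domain realises the threshold correctly shows the exponent $\frac{1}{4}$ is sharp.
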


In other words, the minimum Hardy number for any simply connected domain
is $\frac{1}{2}$. This is therefore considered an extremal case with
regards to the size of a simply connected domain as seen by Brownian
motion. The Koebe domain, $K:=\mathbb{C}\backslash(-\infty,-\frac{1}{4}]$
is considered in many ways extremal among simply connected domains,
and it has infinite $1/4^{th}$ moment of $\tau_{K}$
and thus a Hardy number of $\frac{1}{2}$ (this is proved in \cite{duren2000theory}, and can also be proved using Burkholder's result and calculations similar to Example \ref{exa:halfstriphardy} below). 
There are examples of smaller domains which also have infinite $1/4^{th}$
moment. One of these will be important in what follows, so we take the time to describe it.

\begin{example}
\label{exa:halfstriphardy}Consider a half-strip, for example $\{\Re(z)<0,-1<\Im(z)<1\}$
and label the complement of this half strip $V$. Now consider a slightly
smaller domain $W$, which is the complement of the parabola given by the equation
$x=1-\frac{y^{2}}{4}$. Also consider the conformal map from
the unit disk to $W$, $f(z)=\frac{4}{(1+z)^{2}}.$ We see

\[
\begin{alignedat}{1}H_{p}(f)^p & ={\textstyle \frac{1}{2\pi}}\int_{0}^{2\pi}\vert f(e^{ti})\vert^{2p}dt\\
 & ={\textstyle \frac{1}{2\pi}}\int_{0}^{2\pi}\Bigl|\frac{4}{(1+e^{it})^{2}}\Bigr|^{2p}dt.
\end{alignedat}
\]

The singularity at $t=\pi$ is only contained when $p<\frac{1}{4}$,
thus the $2p^{th}$-Hardy norm is infinite for $p=1/4$. By Theorem \ref{thm:hardyexitequiv} we have
$f\notin H^{\frac{1}{2}}$, meaning $\mathbb{E}[\tau_{W}^{\frac{1}{4}}]=\infty$
and since $W\subset V$, $\mathbb{E}[\tau_{V}^{\frac{1}{4}}]=\infty$
hence $H(V)=\frac{1}{2}$. We may translate the parabola as required
such that any half strip may be fit inside, meaning any half strip
will have infinite $1/4^{th}$ moment of its exit time out of the
domain and thus a Hardy number of $\frac{1}{2}$. 
\end{example}

As another application of Theorem \ref{thm:hardyexitequiv} (applied to the conformal map $f(z)=(\frac{1-z}{1+z})^{\frac{\theta}{\pi}}$), let us consider the following result (which can also be deduced from the explicit formula for the distribution of $\tau(W)$ given in \cite{Markowsky2018}).

\begin{prop} \cite{BURKHOLDER1977182}
\label{prop:wedgehardy}Let $W=\{-\frac{\theta}{2}<Arg(z)<\frac{\theta}{2}\}$ be
a wedge of aperture $\theta \in (0,2\pi]$. Then

\[
\mathbb{E}(\tau(W)^{\frac{p}{2}})<+\infty\Longleftrightarrow p<\frac{\pi}{\theta}.
\]
\end{prop}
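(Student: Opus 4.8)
The plan is to use Theorem \ref{thm:hardyexitequiv}, which converts the question about the finiteness of $\mathbb{E}(\tau(W)^{p/2})$ into a question about the Hardy norm of a conformal map from the unit disc onto the wedge $W$. Writing $p/2$ in place of the exponent appearing in that theorem, the equivalence states that $\mathbb{E}(\tau(W)^{p/2}) < \infty$ if and only if $f \in H^{p}$, where $f$ is any conformal map $\mathbb{D} \to W$. So the entire proposition reduces to computing, for the explicit map $f(z) = \left(\frac{1-z}{1+z}\right)^{\theta/\pi}$ suggested in the statement, the exact set of $p > 0$ for which $H_p(f) < \infty$.

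First I would verify that $f(z) = \left(\frac{1-z}{1+z}\right)^{\theta/\pi}$ really does map $\mathbb{D}$ conformally onto $W$. The Möbius map $z \mapsto \frac{1-z}{1+z}$ sends $\mathbb{D}$ onto the right half-plane $\{\Re(w) > 0\}$, which is precisely the wedge of aperture $\pi$ symmetric about the positive real axis. Raising to the power $\theta/\pi$ then opens or closes this wedge to aperture $\theta$, giving exactly $W = \{-\tfrac{\theta}{2} < \mathrm{Arg}(z) < \tfrac{\theta}{2}\}$; one should check the branch is chosen so the image is the stated wedge and that $f$ is injective, which holds since both the Möbius map and the power map are injective on the relevant region for $\theta \in (0,2\pi]$.

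The core computation is then to determine for which $p$ the integral
\[
N_{p,r}(f)^{p} = \frac{1}{2\pi}\int_0^{2\pi} |f(re^{i\theta})|^{p}\, d\theta
\]
stays bounded as $r \to 1$. The only source of blow-up is near $z = -1$, where $\frac{1-z}{1+z} \to \infty$, so $|f|$ has a singularity of order $\theta/\pi$ there: near $z=-1$ we have $|f(z)| \asymp |1+z|^{-\theta/\pi}$. Thus the integrand behaves like $|1+e^{i\theta}|^{-p\theta/\pi}$ near $\theta = \pi$, and since $|1+e^{i\theta}| \asymp |\theta - \pi|$, the boundary integral converges precisely when $p\theta/\pi < 1$, i.e. when $p < \pi/\theta$. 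A short argument (for instance bounding $|f(re^{i\theta})|$ uniformly in $r$ away from $-1$ and doing the careful local estimate near $-1$, or simply passing to the boundary values $f^*$ since the singularity is integrable exactly in this range) confirms that $\sup_r N_{p,r}(f)$ is finite if and only if $p < \pi/\theta$. Combining this with Theorem \ref{thm:hardyexitequiv} gives the stated equivalence.

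The main obstacle is the local analysis near the singular boundary point $z = -1$: one must justify rigorously that the $\sup$ over $r$ of the integrals is controlled by the boundary integral and that the dominant contribution is exactly the local power-law singularity, with no hidden cancellation or additional divergence. The endpoint case $p = \pi/\theta$ (equivalently the exponent exactly $1$ in the local integrand) must also be handled carefully to confirm that the strict inequality is correct; here the integral $\int |\theta-\pi|^{-1}\,d\theta$ diverges logarithmically, so $p = \pi/\theta$ is genuinely excluded, matching the strict inequality in the statement.
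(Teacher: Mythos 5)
Your proposal is correct and follows exactly the route the paper indicates: the paper states this result as "an application of Theorem \ref{thm:hardyexitequiv} applied to the conformal map $f(z)=(\frac{1-z}{1+z})^{\frac{\theta}{\pi}}$" (citing Burkholder rather than writing out details), and your computation of the singularity of $|f|^p$ at $z=-1$, giving integrability precisely for $p<\pi/\theta$, is the intended argument, paralleling the paper's own Example \ref{exa:halfstriphardy}. The exponent bookkeeping ($\mathbb{E}[\tau(W)^{p/2}]<\infty \iff f\in H^{p}$) and the endpoint exclusion are both handled correctly.
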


Note that the Koebe domain can be considered as a wedge of size $2\pi$
with $\theta=\pi$ giving the required result of 
\[
\mathbb{E}(\tau^{\frac{1}{4}})<+\infty\Longleftrightarrow p<\frac{1}{2},
\]
resulting in a Hardy number of $\frac{1}{2}$ as expected. \\

We will now give an intuitive discussion to motivate a natural conjecture that we have considered. Proposition \ref{prop:wedgehardy} indicates that the aperture of a domain at $\infty$ essentially decides whether the domain is viewed as "small" or "large" by Brownian motion; we will continue to use these terms in quotation marks to indicate their informal nature. Consider now a wedge $V$ of size $\theta \in (0,2\pi)$
and its complement $W$ of size $\alpha = 2\pi-\theta$. Suppose that $\mathbb{E}(\tau_{V}^{\frac{p}{2}})<+\infty$ for some $p$. This means that $V$ is "small", so $W$ must be "large". To be precise, suppose $\frac{1}{p}+\frac{1}{q} \le 2$; we will show that $\mathbb{E}(\tau_{W}^{\frac{q}{2}})=+\infty$. We know $p < \frac{\pi}{\theta}$, so

\begin{equation*}
        2 > \frac{1}{q} + \frac{\theta}{\pi} = \frac{1}{q} + \frac{2\pi - \alpha}{\pi},
\end{equation*}

hence $q > \frac{\pi}{\alpha}$ and thus $\mathbb{E}(\tau_{W}^{\frac{q}{2}})=+\infty$. On the other hand, if we suppose $\mathbb{E}(\tau_{V}^{\frac{p}{2}})=+\infty$ (i.e. $V$ is "large")
and $\frac{1}{p}+\frac{1}{q}>2$ then we can show $\mathbb{E}(\tau_{W}^{\frac{q}{2}})<+\infty$ (i.e. $W$ is "large"). To see this, note that $p \geq \frac{\pi}{\theta}$, so

\begin{equation*}
        2 < \frac{1}{q} + \frac{\theta}{\pi} = \frac{1}{q} + \frac{2\pi - \alpha}{\pi},
\end{equation*}

hence $q < \frac{\pi}{\alpha}$ and $\mathbb{E}(\tau_{W}^{\frac{q}{2}})<+\infty$. These observations have led us to an attempt at generalization, and the Jordan* domains appear to be ideally suited for this purpose, since the Jordan Curve Theorem implies that the interior of the complement of a Jordan* domain is again a Jordan* domain. The following conjecture is therefore natural.\\

{\bf Conjecture:} If $V$ and $W$ are complementary Jordan* domains (that is, they share the same boundary), then they cannot both be "small" or both be "large" as viewed by Brownian motion. \\

We have not attempted to make this statement more rigorous, since we have found example domains that show that virtually any formulation of it will be false. We will now take the time to describe them; in particular, we will prove the following theorem.

\begin{thm}
\begin{enumerate}
    \item There exist complementary Jordan* domains $V$ and $W$ such that $\mathbb{E}[\tau(V)^{1/4}] = \mathbb{E}[\tau(W)^{1/4}] = \infty$.
    
    \item There exist complementary Jordan* domains $V$ and $W$ such that $\mathbb{E}[\tau(V)^{p}] < \infty$ and $\mathbb{E}[\tau(W)^{p}] < \infty$ for all $p>0$.
\end{enumerate}
\end{thm}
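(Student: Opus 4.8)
The plan is to work throughout with the exit position rather than the exit time, exploiting the equivalence $\mathbb{E}_x[\tau(R)^p]<\infty\iff\mathbb{E}_x[|B_{\tau(R)}|^{2p}]<\infty$ from Theorem \ref{thm:hardyexitequiv}. Under this dictionary part (1) asks for complementary domains with $\mathbb{E}[|B_{\tau(V)}|^{1/2}]=\mathbb{E}[|B_{\tau(W)}|^{1/2}]=\infty$, i.e. both exit positions have a heavy ($\gtrsim t^{-1/2}$) tail, while part (2) asks for both exit positions to have every moment finite, i.e. a super-polynomially light tail. Since the harmonic measure of a far circle is exactly the probability that Brownian motion reaches that circle before leaving the domain, everything reduces to estimating $\mathbb{P}_x(\sup_{s\le\tau}|B_s|\ge t)$ in each of the two domains. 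I would carry out these estimates in the logarithmic coordinate $w=\log z$, in which a rotationally spread-out domain becomes a channel of slowly varying width on the cylinder $\mathbb{C}/2\pi i\mathbb{Z}$ and $|B_\tau|=e^{\Re w_\tau}$; the reaching probabilities are then controlled by extremal length (equivalently, Beurling-type harmonic-measure bounds), the relevant quantity being $\int \frac{du}{\theta(u)}$, where $\theta(u)$ is the angular measure of the domain at radius $e^u$.

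For part (2) I would build the two domains symmetrically. Take two logarithmic spirals running from near the origin out to $\infty$, offset by half a turn, and let the pitch tighten to zero as $|z|\to\infty$; these cut the plane into two congruent channels $V$ and $W$ interchanged by the rotation $z\mapsto -z$. In the coordinate $w=\log z$ each channel is a strip whose width shrinks to $0$, so a Brownian motion is forced through ever-tighter necks and the probability of reaching radius $t$ decays faster than any power of $t$; hence every moment of $|B_\tau|$, and so of $\tau$, is finite for both domains at once. Equivalently one can describe $V$ and $W$ as interlocking comb-like domains in which every route to infinity passes through infinitely many long thin corridors, and bound the crossing probability of each corridor by its extremal length. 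Checking that the common boundary is a genuine Jordan curve through $\infty$ (so that $V,W$ are Jordan{*}) and that the moment finiteness is independent of the starting point completes this case.

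For part (1) the construction must instead make both domains as spread out as the Koebe domain along a sequence of scales. I would oscillate the boundary so that $V$ fills almost the whole plane near a sequence of radii $r_k\uparrow\infty$ while $W$ fills almost the whole plane near an interleaved sequence $s_k\uparrow\infty$; at the scales $r_k$ the opening of $V$ tends to $2\pi$, so, as for the half-strip complement of Example \ref{exa:halfstriphardy} and the borderline wedge of Proposition \ref{prop:wedgehardy}, far circles carry harmonic measure of order $t^{-1/2}$, and symmetrically for $W$ at the scales $s_k$. The aim is to show that from each side Brownian motion reaches radius $t$ with probability $\gtrsim t^{-1/2}$ along the appropriate subsequence, forcing $\mathbb{E}[|B_\tau|^{1/2}]=\infty$ for both.

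The main obstacle is exactly this last step for part (1). Because $V$ and $W$ share a boundary, their angular openings sum to $2\pi$ at every radius, so widening one domain at a given scale necessarily pinches the other there; the heavy-tail lower bound for $V$ must therefore survive the passage through the scales where $V$ is pinched and $W$ is wide, and vice versa. Controlling this requires delicate two-sided harmonic-measure estimates — lower bounds on the probability of crossing the pinched annuli that are strong enough to preserve the $t^{-1/2}$ tail for both domains simultaneously — and this borderline ($p=1/4$) balancing is where the real work lies; the corresponding estimates for part (2), being one-sided and comfortably away from the critical exponent, are by comparison routine.
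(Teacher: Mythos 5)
Your part (2) is sound and geometrically the same as the paper's: two interleaved spirals whose angular opening shrinks as $|z|\to\infty$, so that both complementary channels are ``thin at infinity.'' You prove the super-polynomial tail decay by extremal-length/channel-width estimates in the coordinate $w=\log z$, whereas the paper gets the same conclusion by showing every point of either domain lies within bounded Euclidean distance of the boundary, bounding the quasi-hyperbolic (hence hyperbolic) distance from below linearly in $|z|$, and invoking Karafyllia's formula $H(U)=\liminf_R d_U(a,F_R)/\ln R$ to conclude $H(U)=H(\CC\setminus U)=\infty$. The two routes are comparable in effort; yours is more self-contained, the paper's leans on Theorem \ref{thm:bddisttobd} but requires no harmonic-measure computation at all. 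Either way the remaining checks (Jordan{*} boundary, independence of starting point, the passage from moments of $|B_\tau|$ to moments of $\tau$ via Theorem \ref{thm:hardyexitequiv}) are routine.

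Part (1), however, has a genuine gap, and you have named it yourself: you never establish the lower bound $\mathbb{P}(\sup_{s\le\tau}|B_s|\ge t)\gtrsim t^{-1/2}$ along a subsequence for \emph{both} domains simultaneously. This is not a detail one can wave at --- the whole difficulty of the statement is that widening $V$ at scale $r_k$ pinches $W$ there, and a Brownian motion in $W$ must survive every pinched annulus below $t$ before it can exploit the wide annuli; a priori those crossings could destroy the $t^{-1/2}$ tail, and at the critical exponent $p=1/4$ there is no room to spare. The paper's proof avoids this estimate entirely by never proving a tail bound for the limit domains. Instead it builds increasing sequences $V_1\subset V_3\subset V_5\subset\cdots$ and $W_2\subset W_4\subset\cdots$ of \emph{truncated} domains, where at each stage the newly adjusted domain is a subset of (and exhausts, as the truncation parameters grow) a domain already known to have infinite $1/4$-th moment (a complement of a half-strip, via Example \ref{exa:halfstriphardy}); the monotone convergence theorem (Theorem \ref{thm:combthm}) then lets one choose the truncation at stage $n$ so that the finite-stage domain has $1/4$-th moment exceeding $n$, while the complementary domain at that stage still contains a half-strip complement. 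A second application of Theorem \ref{thm:combthm} to the increasing unions $V_\infty=\bigcup V_{2n+1}$, $W_\infty=\bigcup W_{2n+2}$ gives $\mathbb{E}[\tau(V_\infty)^{1/4}]=\lim_n\mathbb{E}[\tau(V_{2n+1})^{1/4}]=\infty$ and likewise for $W_\infty$, with no two-sided harmonic-measure estimate ever needed. If you want to complete your version of part (1), you should either supply the subsequential tail lower bounds (which is genuinely delicate) or restructure the construction around monotone convergence as above, which reduces the problem to exhibiting, at each finite stage, a known ``large'' domain inside each of the two complementary pieces.
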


Part {\it 1} of the theorem provides an example of complementary Jordan* domains which are both "large" as viewed by Brownian motion, and part {\it 2} does the same with both "small". We remind the reader that any Jordan* domain is simply connected and therefore has Hardy number at least $1/2$. This implies that part {\it 1} of the theorem cannot be improved. The remainder of this subsection will provide the proof of part {\it 1}, while part {\it 2} will be proved in the next subsection.

\begin{proof}
We will utilise the following result for domains, which is a straightforward consequence of the monotone convergence theorem.

\begin{thm}
\label{thm:combthm}If $\Omega_{n}$ is an increasing sequence of
domains (i.e. $\Omega_{n}\subseteq\Omega_{n+1}$) and $\Omega=\cup_{n=1}^{\infty}\Omega_{n},$
then $\mathbb{E}[\tau(\Omega_{n})^{p}]\rightarrow\mathbb{E}[\tau(\Omega)^{p}].$
\end{thm}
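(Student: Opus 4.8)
The plan is to reduce the statement to the monotone convergence theorem by first establishing a \emph{pathwise} monotone convergence of the exit times themselves. The two ingredients needed are: (i) for almost every Brownian path, $\tau(\Omega_n)$ is non-decreasing in $n$; and (ii) for almost every path, $\tau(\Omega_n) \to \tau(\Omega)$ as $n \to \infty$. Granting these, the functions $\tau(\Omega_n)^p$ increase pointwise (a.s.) to $\tau(\Omega)^p$, since $t \mapsto t^p$ is increasing on $[0,\infty]$, and the monotone convergence theorem then yields $\mathbb{E}[\tau(\Omega_n)^p] \to \mathbb{E}[\tau(\Omega)^p]$ directly, with both sides permitted to equal $+\infty$. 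This is why the theorem is, as stated, "a straightforward consequence of the monotone convergence theorem."

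First I would establish monotonicity. Fixing a continuous path and recalling $\tau(\Omega_n) = \inf\{t \geq 0 : B_t \notin \Omega_n\}$, the inclusion $\Omega_n \subseteq \Omega_{n+1}$ gives $\{t : B_t \notin \Omega_{n+1}\} \subseteq \{t : B_t \notin \Omega_n\}$, so the infimum over the larger set is the smaller number, whence $\tau(\Omega_1) \leq \tau(\Omega_2) \leq \cdots \leq \tau(\Omega)$. Thus the pointwise limit $\tau_\infty := \lim_n \tau(\Omega_n)$ exists in $[0,\infty]$ and satisfies $\tau_\infty \leq \tau(\Omega)$ (the inequality against $\tau(\Omega)$ coming from $\Omega_n \subseteq \Omega$).

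The heart of the argument is the reverse inequality $\tau_\infty \geq \tau(\Omega)$, and this is where I expect the only genuine subtlety to lie. Fix any $t < \tau(\Omega)$. Since Brownian paths are a.s. continuous and $B_s \in \Omega$ for every $s \leq t < \tau(\Omega)$, the trace $\{B_s : 0 \leq s \leq t\}$ is a compact subset of the open set $\Omega = \bigcup_n \Omega_n$. Because the $\Omega_n$ are open and increasing, they form an open cover of this compact trace, so by compactness the entire trace lies in a single $\Omega_N$. This forces $\tau(\Omega_N) \geq t$, hence $\tau_\infty \geq t$; letting $t \uparrow \tau(\Omega)$ gives $\tau_\infty \geq \tau(\Omega)$. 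Combined with the previous paragraph, $\tau_\infty = \tau(\Omega)$ almost surely.

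The main obstacle is precisely this compactness step, together with the attendant care about boundary behaviour and null sets: one must ensure that up to and including a time $t < \tau(\Omega)$ the path genuinely remains inside the \emph{open} set $\Omega$ rather than touching $\partial\Omega$, and that the exceptional set of discontinuous or otherwise pathological paths is negligible. Both points are dispatched by the a.s.\ continuity of $B_t$ and by the definition of $\tau(\Omega)$ as an infimum over exit from an open set. Everything else is routine: the pathwise increase $\tau(\Omega_n)^p \uparrow \tau(\Omega)^p$ feeds straight into the monotone convergence theorem to conclude. (The argument is carried out for a fixed starting point, which is all that is needed, and makes no use of simple connectivity or any special structure of the $\Omega_n$.)
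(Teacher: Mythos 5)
Your proof is correct and follows exactly the route the paper intends: the paper gives no written proof at all, merely asserting the result is ``a straightforward consequence of the monotone convergence theorem,'' and your argument --- pathwise monotonicity of the exit times, the compactness argument giving $\tau(\Omega_n)\uparrow\tau(\Omega)$ almost surely, then monotone convergence --- is precisely the standard filling-in of that assertion. The compactness step you flag is indeed the only point requiring care, and you handle it correctly.
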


We will construct a sequence of domains $(W_{n})_{n\in\mathbb{N}_{0}}$
and a sequence of each respective complement $(V_{n})_{n\in\mathbb{N}_{0}}$
in which the limiting domains of each sequence will both have infinite
$1/4^{th}$ moment. In each $W_{n}$ we start a Brownian motion $B_{t}$
at $-1$ and in each $V_{n}$ we start a Brownian motion $\hat{B}_{t}$
at 1. 

We begin with an extension of Example \ref{exa:halfstriphardy}, in which we restrict
the complement of the half strip by the lines shown in Figure \ref{fig:1stiterationCE1}
\[
\Re(z)=b_{1},\quad\Im(z)=a_{1}\quad\Im(z)=-a_{1},
\]

where $a_1 > a_0 := 1$
$V_{1}$ is a subset of the complement of a half strip and so, if we
take $a_{1}\rightarrow\infty$ and $b_{1}\rightarrow-\infty$,
then $W_{1}$ becomes a half strip, so $V_{1}$ becomes the complement
of a half strip and by (\ref{thm:combthm}), $\mathbb{E}[\tau(V_{1})^{\frac{1}{4}}]\rightarrow\infty$.
Thus we may take $a_1, b_1$ large enough such that $\mathbb{E}[\tau(V_{1})^{\frac{1}{4}}]>1$, and may also ensure that $a_1 \geq a_0 + 1$. Since the complement of a half strip can fit inside $W_{1}$,
we have that $\mathbb{E}[\tau(W_{1})^{\frac{1}{4}}]=\infty$.

\begin{figure}[H]
\begin{centering}
\includegraphics[scale=0.3]{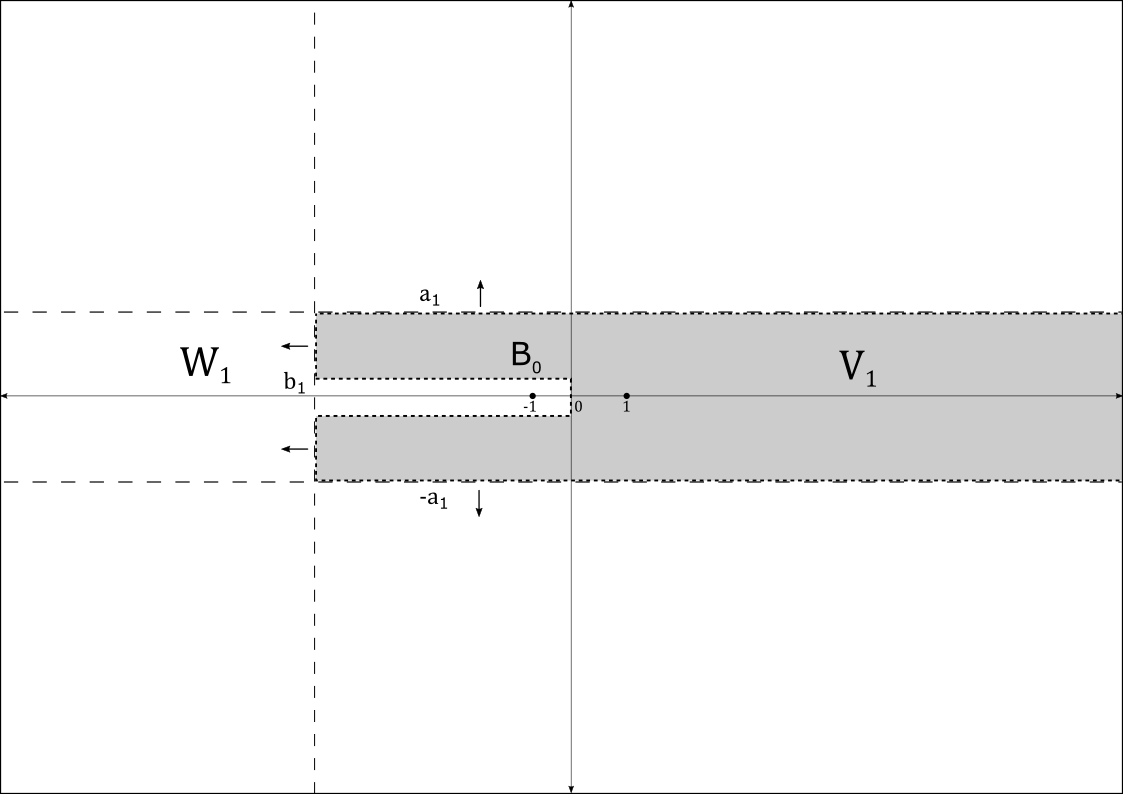}
\par\end{centering}
\caption{\label{fig:1stiterationCE1}$1^{st}$ iteration of domain}
\end{figure}

We can then restrict $W_{1}$ by the lines shown in Figure \ref{fig:2iterationCE1}

\[
\Re(z)=b_{2},\quad\Im(z)=a_{2}\quad\Im(z)=-a_{2}.
\]
As $a_{2}\rightarrow\infty$ and $b_{2}\rightarrow\infty$, 
$W_{2}\rightarrow W_{1}$ and we have that $W_{2}\subset W_{1},$
meaning by Theorem \ref{thm:combthm}, 
\[
\mathbb{E}[\tau(W_{2})^{\frac{1}{4}}]\rightarrow\mathbb{E}[\tau(W_{1})^{\frac{1}{4}}]=\infty.
\]
We may then take these limits large enough such that $\mathbb{E}[\tau(W_{2})^{\frac{1}{4}}]>2$
while again ensuring $a_2 \geq a_1+ 1$. Since the complement of a half strip can fit inside $V_{2}$,
we also have that $\mathbb{E}[\tau(W_{1})^{\frac{1}{4}}]=\infty$. 

\begin{figure}[H]
\begin{centering}
\includegraphics[scale=0.35]{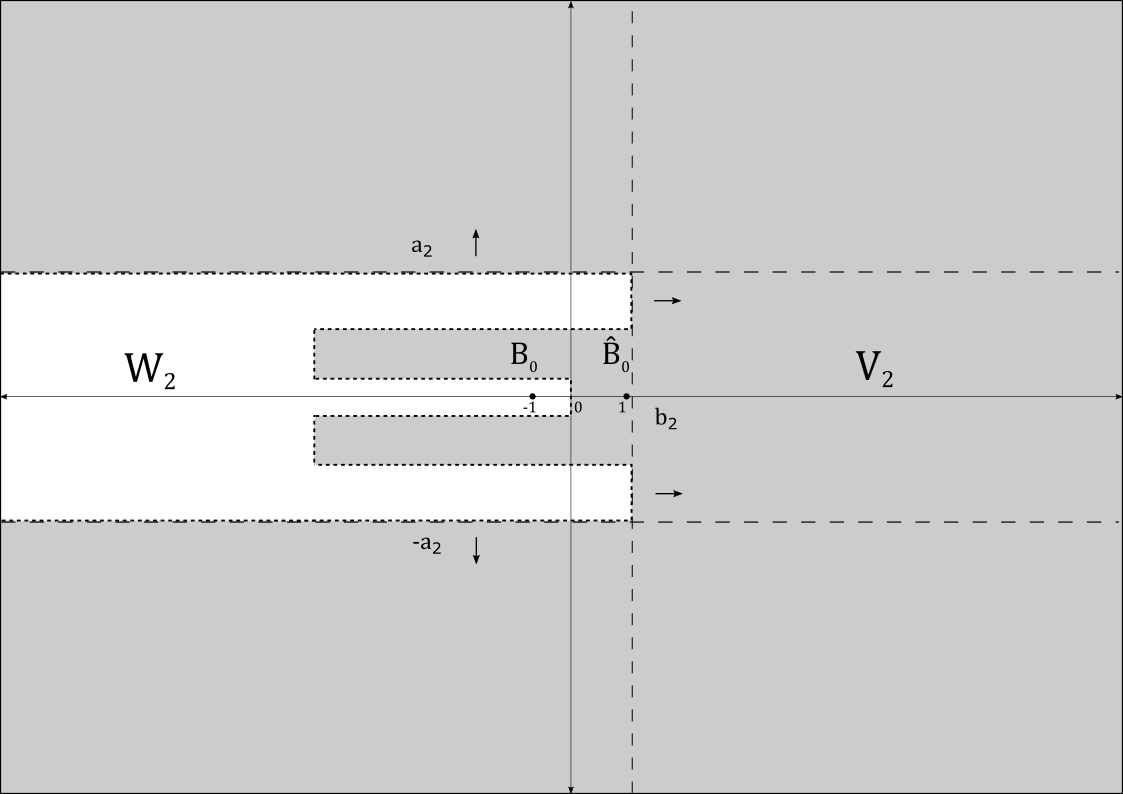}
\par\end{centering}
\caption{\label{fig:2iterationCE1}$2^{nd}$ iteration of domain}

\end{figure}

We may continue iterating this process. In the odd iterations, for
all $n\in\mathbb{N}_{0}$, as $a_{2n+1}\rightarrow\infty$, $b_{2n+1}\rightarrow-\infty$, then $V_{2n+1}\rightarrow V_{2n}$
and since $V_{2n+1}\subset V_{2n}$, by Theorem \ref{thm:combthm}, 
\[
\mathbb{E}[\tau(V_{2n+1})^{\frac{1}{4}}]\rightarrow\mathbb{E}[\tau(V_{2n})^{\frac{1}{4}}]=\infty.
\]
Thus, the limits may be taken large enough such that $\mathbb{E}[\tau(V_{2n+1})^{\frac{1}{4}}]>2n+1$
while once again ensuring that $a_{2n+1} \geq a_{2n}+1$. The complement of a half strip can fit inside $W_{2n+1}$
meaning $\mathbb{E}[\tau(W_{2n+1})^{\frac{1}{4}}]=\infty$. \\

Following the same logic with the as the odd iterations we construct the even
iterations such that $\mathbb{E}[\tau(W_{2n+2})^{\frac{1}{4}}]>2n+2$
and $\mathbb{E}[\tau(V_{2n})^{\frac{1}{4}}]=\infty$, with $a_{2n} \geq a_{2n-1}+1$ . \\

We show three more iterations in Figure \ref{fig:345iterationCE1}.

\begin{figure}[H]
\begin{centering}
\includegraphics[scale=0.2]{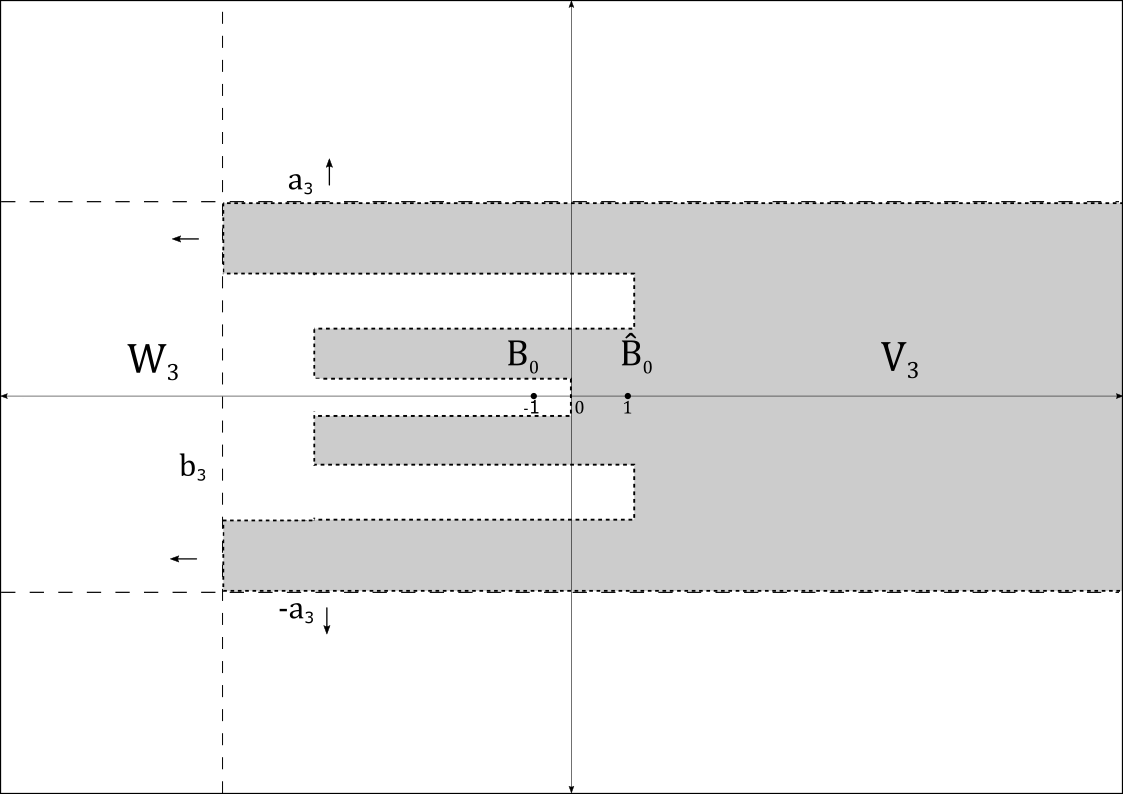}
\par\end{centering}
\begin{centering}
\includegraphics[scale=0.2]{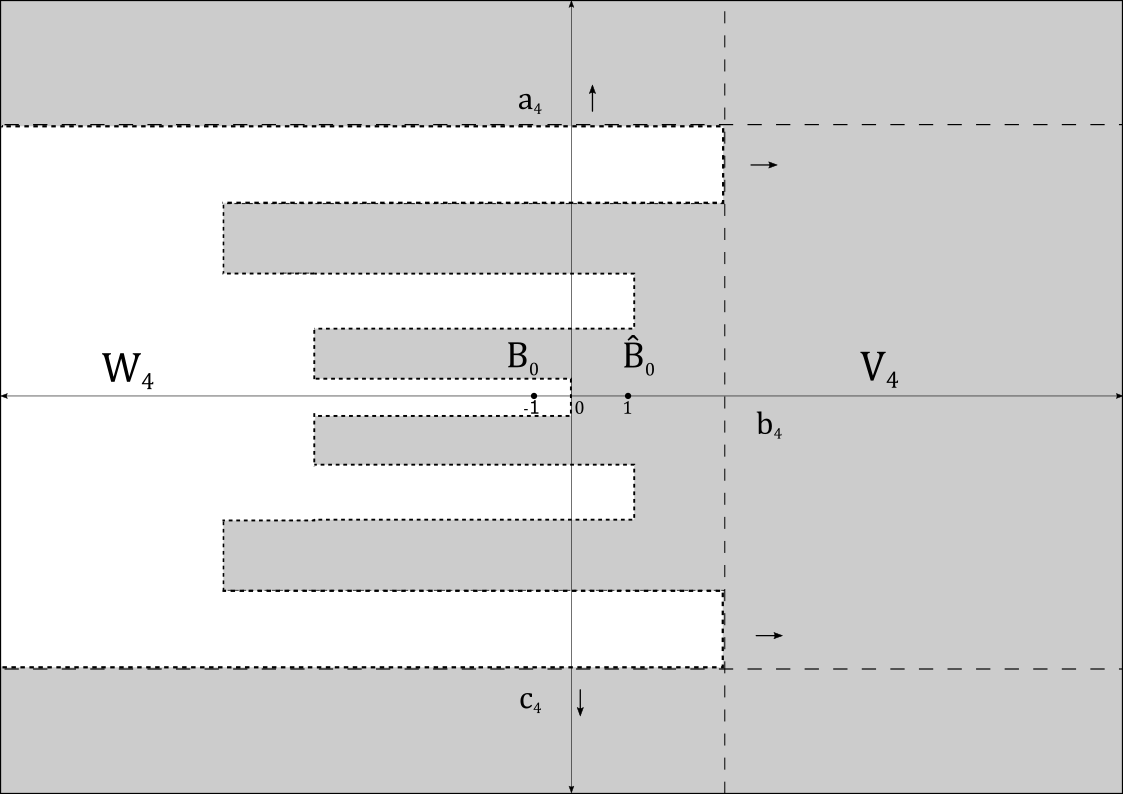}\includegraphics[scale=0.2]{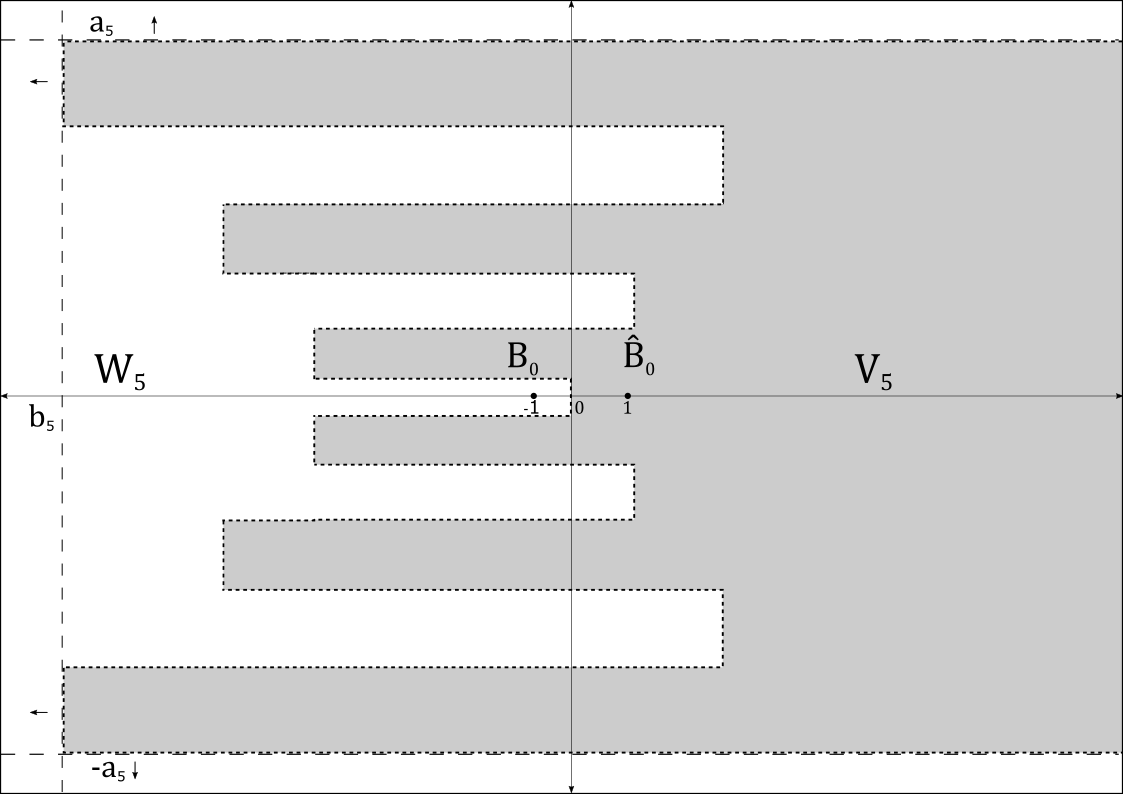}
\par\end{centering}
\caption{\label{fig:345iterationCE1}$3^{rd},$$4^{th}$, and $5^{th}$ iterations
of domain}

\end{figure}

Notice $V_{2n+1}\subset V_{2n+3}$ and $W_{2n+2}\subset W_{2n+4}$
for all $n\in\mathbb{N}_{0}$. Thus we define the required domains
\[
V_{\infty}=\bigcup_{n=0}^{\infty}V_{2n+1}\qquad W_{\infty}=\bigcup_{n=1}W_{2n+2}.
\]

By construction, the $a_n$'s go to $+\infty$, so $V_{\infty}$ and $W_{\infty}$ are both Jordan* domains. Since $(V_{2n+1})_{n=0}^{\infty}$
is an increasing sequence of domains, by Theorem \ref{thm:combthm} we have 

\[
\mathbb{E}[\tau(V_{2n+1})^{\frac{1}{4}}]\rightarrow\mathbb{E}[\tau(V_{\infty})^{\frac{1}{4}}]=\infty.
\]

The same is true of $W_{\infty}$ and thus in the limit we have a
domain and its complement who both have infinite $1/4^{th}$ moment of Brownian exit time,
as required.

\end{proof}

\subsection{Hyperbolic Geometry and Hardy Numbers}

We now construct a second counterexample to the conjecture in which the domain and
its complement both have finite $p^{th}$ moment of the exit time
for any $p>0$. To construct this second counterexample we must recall theory
from hyperbolic geometry (\cite{article}, \cite{karafyllia2021hardy}). We first define hyperbolic distance on the unit disc, which is induced
by the hyperbolic density.

\begin{defn}
(Hyperbolic density)

The hyperbolic density on the unit disc $\mathbb{D}$ is defined as

\[
\lambda_{\mathbb{D}}(z)|dz|=\frac{2|dz|}{1-|z|^{2}}
\]
\end{defn}

The definition of hyperbolic density is motivated by the following
isometric property

\[
\lambda_{\mathbb{D}}(\phi(z))|\phi'(z)|=\lambda_{\mathbb{D}}(z)
\]

for a conformal automorphism $\phi$. The hyperbolic density induces
hyperbolic length and hyperbolic distance in the following way.

\begin{defn}
(Hyperbolic length)

For any two points, $z,w\in\mathbb{D}$, we define the hyperbolic
length $\ell_{\mathbb{D}}$ as

\[
\ell_{\mathbb{D}}(\gamma)=\int_{\gamma}\lambda_{\mathbb{D}}(z)|dz|
\]
\end{defn}

As such, we define the hyperbolic distance as follows:
\begin{defn}
(Hyperbolic distance)

The hyperbolic distance on the unit disc $\mathbb{D}$, for $z,w\in\mathbb{D}$
is defined by

\[
d_{\mathbb{D}}(z,w)=\inf_{\gamma}\ell_{\mathbb{D}}(\gamma)
\]

More generally, for any simply connected domain $U\neq\mathbb{C}$,
any conformal $\phi:\mathbb{D}\rightarrow U$, and $z,w\in U$,

\[
d_{U}(z,w)=d_{\mathbb{D}}(\phi^{-1}(z),\phi^{-1}(w))
\]

and for any $E\subset U$,

\[
d_{U}(z,E)=\inf\{d_{U}(z,w):w\in E\}.
\]
\end{defn}

We may also a define a related quantity used to estimate the hyperbolic
distance.
\begin{defn}
(Quasi-hyperbolic distance)

For any two points, $z,w$, in a simply connected domain $U$, the
quasi-hyperbolic distance is defined as

\[
\delta_{U}(z,w)=\inf_{\gamma:z\rightarrow w}\int_{\gamma}\frac{|ds|}{|s-\partial U|}.
\]

Where $|.|$ is the Euclidean norm and the infinimum is taken over all
paths $\gamma$ which connect $z$ to $w$. 
\end{defn}

The quasi-hyperbolic distance estimates the hyperbolic distance by
the following result, which is a consequence of the celebrated Kobe$-\frac{1}{4}$
theorem \cite{carleson2013complex}.

\begin{thm}
$\frac{1}{2}\delta_{U}\le d_{U}\le2\delta_{U}$.
\end{thm}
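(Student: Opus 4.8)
The plan is to deduce the two-sided comparison from a pointwise comparison of the two conformal densities that generate $d_U$ and $\delta_U$. First I would record that the hyperbolic metric on $U$ is the pushforward, under any Riemann map $\phi:\mathbb{D}\to U$, of the metric on $\mathbb{D}$: writing $\lambda_U$ for the resulting density, the isometric property gives $\lambda_U(\phi(w))\,|\phi'(w)| = \tfrac{2}{1-|w|^2}$, and this is independent of the choice of $\phi$, since two Riemann maps differ by an automorphism of $\mathbb{D}$ under which $\lambda_{\mathbb{D}}$ is invariant. A change of variables along $\gamma$ and $\phi^{-1}\circ\gamma$ shows that $\phi$ preserves the lengths $\int_\gamma \lambda_U(s)\,|ds|$, so the pulled-back distance is the length metric of $\lambda_U$; that is, $d_U(z,w) = \inf_{\gamma:z\to w}\int_\gamma \lambda_U(s)\,|ds|$. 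Since $\delta_U$ is by definition the length metric of the density $s\mapsto 1/\mathrm{dist}(s,\partial U)$, it suffices to compare $\lambda_U$ and $1/\mathrm{dist}(\cdot,\partial U)$ pointwise.

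The key step — and the only place Koebe enters — is the density estimate $\tfrac{1}{2\,\mathrm{dist}(z,\partial U)} \le \lambda_U(z) \le \tfrac{2}{\mathrm{dist}(z,\partial U)}$. To prove it I would fix $z_0 = \phi(w_0)\in U$ and precompose $\phi$ with the automorphism of $\mathbb{D}$ carrying $0$ to $w_0$, obtaining a conformal $\psi:\mathbb{D}\to U$ with $\psi(0)=z_0$ and $|\psi'(0)| = (1-|w_0|^2)|\phi'(w_0)| = 2/\lambda_U(z_0)$. Applying the Koebe $\tfrac14$ theorem to the normalized univalent map $(\psi-z_0)/\psi'(0)$ shows that $U$ contains the Euclidean disc of radius $|\psi'(0)|/4$ about $z_0$, so $\mathrm{dist}(z_0,\partial U)\ge |\psi'(0)|/4$. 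For the reverse inequality I would set $R=\mathrm{dist}(z_0,\partial U)$, note $\{|z-z_0|<R\}\subseteq U$, and apply the Schwarz lemma to the self-map $u\mapsto \psi^{-1}(z_0+Ru)$ of $\mathbb{D}$, whose derivative at $0$ is $R/\psi'(0)$; this yields $R\le|\psi'(0)|$. Together these give $\tfrac14|\psi'(0)|\le \mathrm{dist}(z_0,\partial U)\le|\psi'(0)|$, which is exactly the claimed bound on $\lambda_U(z_0)=2/|\psi'(0)|$.

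Finally I would transfer the pointwise bound to the distances. Integrating $\tfrac12\,\tfrac{1}{\mathrm{dist}(s,\partial U)}\le \lambda_U(s)\le 2\,\tfrac{1}{\mathrm{dist}(s,\partial U)}$ along an arbitrary path $\gamma$ from $z$ to $w$ preserves the inequalities between the line integrals; since each of $d_U$ and $\delta_U$ is the infimum of the corresponding integral over all such $\gamma$, taking infima gives $\tfrac12\delta_U(z,w)\le d_U(z,w)\le 2\delta_U(z,w)$. The main obstacle is the density estimate of the middle paragraph: it is where one must combine Koebe (for the lower bound on $\mathrm{dist}$) with the Schwarz lemma (for the upper bound), and some care is needed in normalizing $\phi$ at a general point $w_0$ by a disc automorphism so that both classical estimates apply at the origin. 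The passage from densities to distances is then routine, being only the monotonicity of the integral and of the infimum.
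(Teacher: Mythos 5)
Your argument is correct, and it is the standard proof: the pointwise density comparison $\tfrac{1}{2\,\mathrm{dist}(z,\partial U)}\le\lambda_U(z)\le\tfrac{2}{\mathrm{dist}(z,\partial U)}$ via the Koebe $\tfrac14$ theorem and the Schwarz lemma, followed by integration along paths and passage to infima. The paper itself gives no proof of this theorem, only a citation and the remark that it is a consequence of Koebe $\tfrac14$, so your write-up supplies exactly the argument being alluded to; all steps, including the normalization at $w_0$ by a disc automorphism and the well-definedness of $\lambda_U$, check out.
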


We may apply this theory to the
theory of Hardy domains to construct the second counterexample using
the following theorem given in \cite{karafyllia2019hardy}.
\begin{thm}
\label{thm:bddisttobd}Let $U$ be a simply connected domain, $a\in U$,
$W_R=\{|z-a|=R\}$, and $F_{R}= W_R \cap U$ for $R>0$, then

\[
H(U)=\liminf_{R\rightarrow\infty}\frac{d_{U}(a,F_{R})}{\ln(R)}.
\]
\end{thm}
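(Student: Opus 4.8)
The plan is to translate the analytic quantity $H(U)$ into a statement about the exit position of Brownian motion, and then to connect the resulting harmonic measure to the hyperbolic distance $d_U(a,F_R)$. Fix a conformal map $\phi:\mathbb{D}\to U$ with $\phi(0)=a$. By Lévy's theorem (Theorem \ref{Levy}) the exit position $B_{\tau(U)}$ has the same law as the radial boundary value $\phi^{\ast}(e^{i\Theta})$ with $\Theta$ uniform on $[0,2\pi)$, and by Theorem \ref{thm:hardyexitequiv} we have
\[
H(U)=\sup\{s>0:\mathbb{E}_a[|B_{\tau(U)}|^{s}]<\infty\}.
\]
First I would rewrite the moment condition as a tail condition: the layer-cake formula $\mathbb{E}_a[|B_{\tau(U)}|^{s}]=\int_0^{\infty}sR^{s-1}P_a(|B_{\tau(U)}|>R)\,dR$, together with the standard principle that the supremal finite-moment exponent of a random variable with monotone tail equals its tail-decay exponent, gives
\[
H(U)=\liminf_{R\to\infty}\frac{-\log P_a(|B_{\tau(U)}|>R)}{\log R}.
\]

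The second step is to recognise $P_a(|B_{\tau(U)}|>R)$ as the harmonic measure $\omega_U(a,\partial U\cap\{|z|>R\})$; since $a$ is fixed, replacing $|z|$ by $|z-a|$ shifts $R$ by a bounded amount and does not affect the liminf, so I may work with the boundary beyond the circle $W_R$. Transferring to the disc via $\phi$, this harmonic measure equals $|A_R|/2\pi$, where $A_R=\{e^{i\theta}:|\phi^{\ast}(e^{i\theta})-a|>R\}$ is the boundary set mapping outside $W_R$, while conformal invariance of the hyperbolic metric gives $d_U(a,F_R)=d_{\mathbb{D}}(0,\gamma_R)$ for the separating set $\gamma_R=\phi^{-1}(F_R)$. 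The theorem thus reduces to the purely disc-theoretic comparison
\[
\liminf_{R\to\infty}\frac{-\log|A_R|}{\log R}=\liminf_{R\to\infty}\frac{d_{\mathbb{D}}(0,\gamma_R)}{\log R}.
\]

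The heart of the argument, and the step I expect to be the \emph{main obstacle}, is to show that $-\log|A_R|$ and $d_{\mathbb{D}}(0,\gamma_R)$ agree up to an error that is $o(\log R)$. The upper bound $|A_R|\le C\,e^{-d_{\mathbb{D}}(0,\gamma_R)}$ is the easier direction: because $\gamma_R$ separates $0$ from $A_R$, the harmonic measure of $A_R$ decays at least exponentially in the hyperbolic distance to the separating set, which follows from the Beurling projection theorem or an extremal-length estimate. The reverse bound is the delicate one: I would locate the hyperbolically nearest point $w^{\ast}\in\gamma_R$ to $0$, so that $d_{\mathbb{D}}(0,w^{\ast})=d_{\mathbb{D}}(0,\gamma_R)$ and $\phi(w^{\ast})\in F_R$, and then estimate the probability that Brownian motion from $0$ first reaches a fixed hyperbolic ball about $w^{\ast}$ (a factor comparable to $e^{-d_{\mathbb{D}}(0,\gamma_R)}$) and thereafter exits on $A_R$. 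The difficulty is controlling this second factor: one must use that the domain genuinely continues outward past $F_R$ to guarantee the exit-beyond-$R$ probability is not too small, and one must check that whatever loss occurs is sub-polynomial in $R$, hence invisible after dividing by $\log R$. The comparison $\tfrac12\delta_U\le d_U\le 2\delta_U$ between hyperbolic and quasi-hyperbolic distance, together with the Koebe $\tfrac14$ theorem, is the natural tool for converting the geometric distance to $F_R$ into the analytic estimates needed here.

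A cleaner, Brownian-motion-free alternative that I would keep in reserve is to route through the maximum modulus $M_{\infty}(r)=\max_{|z|=r}|\phi(z)|$. There is a known characterisation $H(U)=\liminf_{r\to 1}\frac{\log\frac{1}{1-r}}{\log M_{\infty}(r)}$, and one checks directly that the hyperbolically nearest point of $\gamma_R$ to $0$ sits at radius $r=M_{\infty}^{-1}(R)$, so that $d_U(a,F_R)\approx\log\frac{1}{1-M_{\infty}^{-1}(R)}$; the substitution $R=M_{\infty}(r)$ then turns the right-hand side of the theorem into the maximum-modulus liminf, yielding the result. Either way, the essential content is the equivalence between the rate at which the domain opens up at infinity (measured hyperbolically) and the rate of decay of the harmonic measure of its distant boundary, and verifying this equivalence with sharp enough constants is where the real work lies.
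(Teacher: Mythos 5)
First, a point of reference: the paper does not prove this theorem at all --- it is imported from \cite{karafyllia2019hardy}, with only the remark that circles centred at an arbitrary $a\in U$ replace circles centred at $0$. So your attempt is being measured against the published proof, not against anything internal to the paper.

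Your opening reduction is fine: Theorem \ref{thm:hardyexitequiv} plus the tail/moment dictionary does give $H(U)=\liminf_{R\to\infty}\frac{-\log \omega_U(a,\partial U\cap\{|z-a|>R\})}{\log R}$, and your sketch of the direction you call delicate (Harnack applied to $z\mapsto\omega_U(z,E)$ gives $\omega_U(a,E)\ge e^{-d_U(a,w^{\ast})}\omega_U(w^{\ast},E)$, and Beurling's projection theorem makes the second factor at least $\tfrac12$ once one relaxes the target to $E_{R^{1-\epsilon}}$) is essentially correct and yields $H(U)\le\liminf d_U(a,F_R)/\ln R$. The genuine gap is in the direction you call easier. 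The pointwise inequality $\omega_U(a,E_R)\le C e^{-d_U(a,F_R)}$ is \emph{false} in general: $F_R$ need not be connected, so $\gamma_R=\phi^{-1}(F_R)$ is a union of crosscuts, $d_{\mathbb{D}}(0,\gamma_R)$ is the distance to the \emph{nearest} one, and the separation argument only bounds $|A_R|$ by the total length of the shadows of all the crosscuts. Each shadow has length $\lesssim e^{-d(0,\gamma_R^{(j)})}$, but there may be enormously many components, and the union of shadows can have measure of order $1$ while every component is hyperbolically far from $0$. Concretely, remove from a large disc most of the circle $\{|z-a|=R\}$, leaving only tiny gaps: the resulting domain is simply connected, $d_U(a,F_R)$ is as large as you like, yet $\omega_U(a,E_R)$ is close to $1$. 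This is precisely the subtlety Karafyllia isolates, and it cannot be patched by accepting a worse constant or a worse exponent in the comparison, since any degradation of the exponent destroys the claimed \emph{equality} of liminfs.

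Consequently the inequality $H(U)\ge\liminf d_U(a,F_R)/\ln R$ must be obtained another way, and your ``alternative in reserve'' is in fact the load-bearing argument rather than a luxury. Your identification $r_R=M_\infty^{-1}(R)$ for $r_R=\min\{|z|:z\in\gamma_R\}$ is exactly right (if $|z|<M_\infty^{-1}(R)$ then $|\phi(z)|<R$, while some point at radius $M_\infty^{-1}(R)$ has $|\phi|=R$), whence $d_U(a,F_R)=\log\frac{1+r_R}{1-r_R}$ and, since $R\mapsto r_R$ sweeps out all radii near $1$, the substitution turns $\liminf_R d_U(a,F_R)/\ln R$ into $\liminf_{r\to 1}\log\frac{1}{1-r}/\log M_\infty(r)$. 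This gives both inequalities at once --- but only modulo the characterisation $H(U)=\liminf_{r\to1}\log\frac{1}{1-r}/\log M_\infty(r)$, which is itself a nontrivial theorem that you would have to prove or cite. As written, then, your primary route does not close, and your secondary route is a correct reduction of the statement to another known result rather than a self-contained proof.
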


\begin{rem}
In \cite{karafyllia2019hardy}, $F_{R}$ was defined as $R\partial\mathbb{D}\cap U$, but here we require the natural extension to circles centered at arbitrary values $a$ in $U$. 
\end{rem}

Finally we will implement the following lemma.
\begin{lem}
If $U$ is simply connected and $|z-\partial U|<K$ for all $z\in U$
for some $K\in\mathbb{R^{+}}$, then $H(U)=\infty$.
\end{lem}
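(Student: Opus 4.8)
The plan is to use Theorem \ref{thm:bddisttobd}, which characterizes the Hardy number as $H(U)=\liminf_{R\to\infty}\frac{d_U(a,F_R)}{\ln R}$. Since the hypothesis gives us control over the Euclidean distance to the boundary, the natural route is to bound the hyperbolic distance from below by exhibiting a lower bound that grows like $\ln R$ with an arbitrarily large coefficient. Because the quasi-hyperbolic distance $\delta_U$ sandwiches $d_U$ via $\frac{1}{2}\delta_U \le d_U \le 2\delta_U$, it suffices to prove that $\delta_U(a,F_R)$ grows faster than any multiple of $\ln R$; the factor $\frac{1}{2}$ is harmless for showing the $\liminf$ is infinite.

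First I would fix the base point $a$ and consider any path $\gamma$ from $a$ to a point $w \in F_R$, i.e. a point with $|w-a|=R$. Along such a path the integrand in the definition of $\delta_U$ is $\frac{1}{|s-\partial U|}$, and the hypothesis $|z-\partial U| < K$ for all $z \in U$ means this integrand is always at least $\frac{1}{K}$. The key observation is that the path must traverse a Euclidean length of at least $R - |a-\text{(something)}|$, but that only yields linear-in-$R$ growth, which is far more than enough — in fact it shows $\delta_U(a,F_R) \ge \frac{R - C}{K}$ for a constant $C$ depending on $a$. Since $\frac{R}{\ln R} \to \infty$, taking the infimum over paths and then the $\liminf$ over $R$ gives $H(U) = \infty$. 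The monotonicity of $N_{p,r}$ and the inclusion $\mathcal{H}^q \subseteq \mathcal{H}^p$ are not needed here; everything is funnelled through the characterization theorem.

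The one place requiring care, and the step I expect to be the main obstacle, is justifying the lower bound on the quasi-hyperbolic length rigorously: the integrand bound $\frac{1}{|s-\partial U|} \ge \frac{1}{K}$ is immediate, but I must confirm that every admissible path genuinely accumulates Euclidean length comparable to $R$. This follows because the endpoint lies at Euclidean distance $R$ from $a$, so $\int_\gamma |ds| \ge R$ by the triangle inequality applied to the straight-line displacement; combining $\delta_U(a,w) \ge \frac{1}{K}\int_\gamma |ds| \ge \frac{R}{K}$ and taking the infimum over $w \in F_R$ and over paths preserves the bound. I would then assemble the pieces: $d_U(a,F_R) \ge \frac{1}{2}\delta_U(a,F_R) \ge \frac{R}{2K}$, hence $\frac{d_U(a,F_R)}{\ln R} \ge \frac{R}{2K \ln R} \to \infty$, so the $\liminf$ is $\infty$ and therefore $H(U) = \infty$ by Theorem \ref{thm:bddisttobd}.
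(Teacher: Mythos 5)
Your proposal is correct and follows essentially the same route as the paper: bound the quasi-hyperbolic integrand below by $\frac{1}{K}$, use $\int_\gamma |ds| \ge |w-a| = R$ to get $\delta_U(a,F_R) \ge \frac{R}{K}$, pass to $d_U \ge \frac{1}{2}\delta_U$, and conclude via the characterization $H(U)=\liminf_{R\to\infty}\frac{d_U(a,F_R)}{\ln R}$. No substantive differences.
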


\begin{proof}
By definition and assumption we have
\[
\delta_{U}(a,z)=\inf_{\gamma:a\rightarrow z}\int_{\gamma}\frac{|ds|}{|s-\partial U|}>\inf_{\gamma:a\rightarrow z}\int_{\gamma}\frac{|ds|}{K}>\frac{|z-a|}{K}.
\]

From (\ref{thm:bddisttobd}), 

\[
d_{U}(a,z)\ge\frac{1}{2}\delta_{U}(a,z)>\frac{|z-a|}{2K}.
\]
Hence 

\[
H(U)=\liminf_{R\rightarrow\infty}\frac{d_{U}(a,F_{R})}{\ln(R)}\ge\liminf_{R\rightarrow\infty}\frac{R}{2K\text{\ensuremath{\ln(R)}}}=\infty.
\]
\end{proof}
\begin{example}
We will construct a domain and its complement which will both have
with finite $p^{th}$ moment for any $p$ for the exit time of a Brownian
motion exiting the domains. 

Let $\gamma_{1}=\{t\ge0:te^{it}\}$ shown in (\ref{fig:counterexample2})
in orange and $\gamma_{2}=\{t:te^{i(t-\pi)}\}$ shown in blue. Concatenating
these curves we split the complex plane into $U$ and $\mathbb{C}\backslash U$.
Any point $a$ in $U$ or $b$ in $\mathbb{C}\backslash U$ will be
within $\pi$ of $\gamma_{1}$ or $\gamma_{2}$. Hence by (\ref{thm:bddisttobd}),
$H(U)=\infty$ and $H(\mathbb{C}\backslash U)=\infty$ meaning in
turn both have exit times with finite $p^{th}$ moment for any $p$. 

\begin{figure}[H]
\begin{centering}
\includegraphics[scale=0.35]{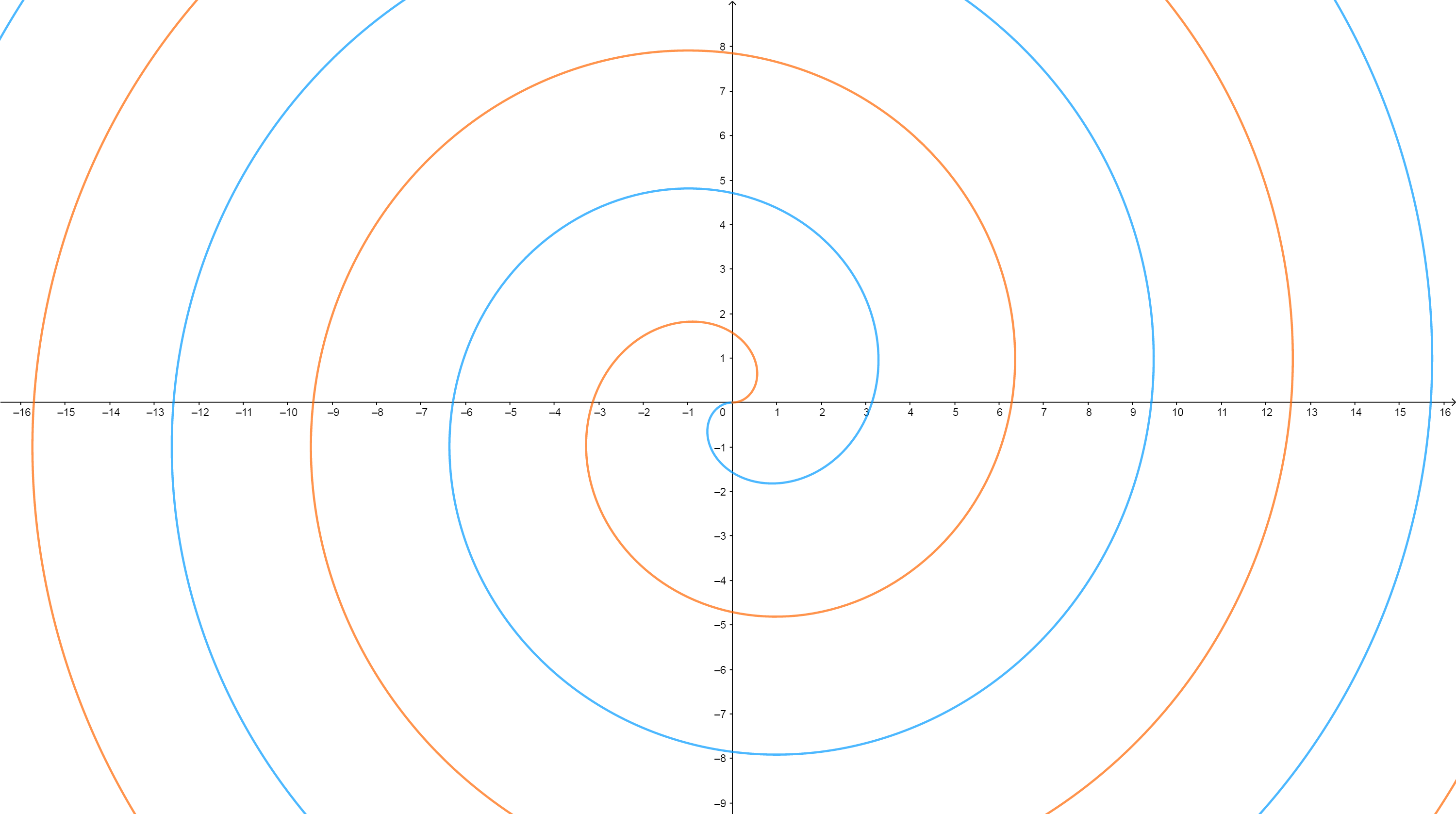}
\par\end{centering}
\centering{}\caption{\label{fig:counterexample2}Domain and complement with infinite Hardy
numbers}
\end{figure}
\end{example}

\section{Characterisations of the Cauchy Distribution}

In this short final section, we examine a method of deducing identities related to planar Brownian motion by applying the optional stopping theorem to complex-valued martingales. This method has been used previously in \cite{chin2019some} to deduce the exit distributions of Brownian motion from a half-plane and from a strip, as well as the identity

\[
\mathbb{E}[e^{i\lambda\frac{2}{\pi}\ln|C_{1}|}]=\frac{1}{\cosh\lambda},
\]

with $C_1$ a standard Cauchy, which was proved in \cite{bourg} by other methods (however, unbeknownst to the authors of \cite{chin2019some} at the time, the arguments for the strip and half-plane were already present in Exercises 2.18 and 2.19 in \cite{morters2010brownian}). \\

We will focus the method on two identities of the Cauchy distribution  given
in \cite{okamura2020characterizations} and proved via the
residue theorem. Here alternate probabilistic proofs are offered.
\begin{prop}
Let C be distributed $\textnormal{Cauchy}(a,b)$ and $\gamma=a+bi\in\mathbb{C}$.
If $\alpha\in\mathbb{C}$ and $\Im(\alpha)>0$ then $\mathbb{E}[\frac{C-\alpha}{C-\bar{\alpha}}]=\frac{\gamma-\alpha}{\gamma-\bar{\alpha}}$.
\end{prop}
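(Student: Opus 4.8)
The plan is to realise $C$ as the exit position of a planar Brownian motion from the upper half-plane and then apply the optional stopping theorem to a bounded conformal martingale. Recall that if $B_t$ is a planar Brownian motion started at $\gamma = a+bi$ with $b>0$, then its exit position $B_{\tau(\mathbb{H})}$ from the upper half-plane $\mathbb{H} = \{\Im(z)>0\}$ has precisely the $\textnormal{Cauchy}(a,b)$ distribution on $\mathbb{R}$; this is the identity underlying the method of \cite{chin2019some} and may be taken as known. Hence it suffices to compute $\mathbb{E}_\gamma[g(B_{\tau(\mathbb{H})})]$ for the function $g(z) = \frac{z-\alpha}{z-\bar\alpha}$.

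The key observation is that $g$ is exactly the Möbius transformation sending $\mathbb{H}$ onto the unit disc. Since $\Im(\alpha)>0$, the pole $\bar\alpha$ lies in the lower half-plane, so $g$ is analytic on an open neighbourhood of $\overline{\mathbb{H}}$; moreover $|z-\alpha| = |z-\bar\alpha|$ for real $z$ gives $|g|=1$ on $\mathbb{R}$, and $g(\alpha)=0$, so $|g|\le 1$ throughout $\mathbb{H}$. In particular $g$ is a bounded analytic function on $\mathbb{H}$, and the integrand $g(C)$ is bounded (of modulus one), so no integrability issues arise.

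Next I would invoke Lévy's theorem (Theorem \ref{Levy}): $g(B_t)$ is a time-changed planar Brownian motion, hence each coordinate of $g(B_{t\wedge\tau(\mathbb{H})})$ is a local martingale, and boundedness by $1$ upgrades this to a genuine bounded martingale. Since Brownian motion exits $\mathbb{H}$ in finite time almost surely, we have $t\wedge\tau(\mathbb{H}) \to \tau(\mathbb{H})<\infty$, and continuity of $g$ up to the boundary together with bounded convergence yields
$$\mathbb{E}_\gamma[g(B_{\tau(\mathbb{H})})] = \lim_{t\to\infty}\mathbb{E}_\gamma[g(B_{t\wedge\tau(\mathbb{H})})] = g(B_0) = g(\gamma) = \frac{\gamma-\alpha}{\gamma-\bar\alpha}.$$
Combining this with the Cauchy interpretation of $B_{\tau(\mathbb{H})}$ gives $\mathbb{E}[\frac{C-\alpha}{C-\bar\alpha}] = \frac{\gamma-\alpha}{\gamma-\bar\alpha}$, as claimed.

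The only delicate point is the justification of optional stopping, i.e. passing from the stopped martingale at finite times to its value at $\tau(\mathbb{H})$. This is where one must use both that $\tau(\mathbb{H})$ is almost surely finite and that $g(B_{t\wedge\tau(\mathbb{H})})$ is uniformly bounded; these two facts together make the limit exchange routine, so I expect no genuine obstacle beyond this standard verification.
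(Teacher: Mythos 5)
Your proposal is correct and follows essentially the same route as the paper's own proof: realise $C$ as the exit position of Brownian motion from $\mathbb{H}$, map to the unit disc via the Möbius transformation $z\mapsto\frac{z-\alpha}{z-\bar{\alpha}}$ to obtain a bounded analytic martingale, and apply optional stopping. Your version simply spells out the justification of the optional stopping step (almost sure finiteness of $\tau(\mathbb{H})$ plus bounded convergence) in more detail than the paper does.
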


\begin{proof}
Start a Brownian motion at $\gamma$ in the upper half plane and stop
it at $T(\mathbb{H})$. As is well-known (see \cite{chin2019some}), $B_{T(\mathbb{H})}\sim\textnormal{Cauchy}(a,b)$
so $B_{T(\mathbb{H})}\overset{d}{=}C$. We can transform this distribution
by $\phi(z)=\frac{z-\alpha}{z-\bar{\alpha}}$ which maps $\mathbb{H}$
to $\mathbb{D}$ and $\alpha$ to $0$. Since $\phi$ maps to $\mathbb{D}$,
it is bounded and since it is analytic, $\phi(B_{t})$ is a martingale. Thus we may utilise
the optional stopping theorem.

\[
\mathbb{E}[\frac{C-\alpha}{C-\bar{\alpha}}]=\mathbb{E}[\frac{B_{\tau}-\alpha}{B_{\tau}-\bar{\alpha}}]=\mathbb{E}[\frac{B_{0}-\alpha}{B_{0}-\bar{\alpha}}]=\frac{\gamma-\alpha}{\gamma-\bar{\alpha}}.
\]

\end{proof}

In proving the second Cauchy distribution identity, the following
results from Burkholder \cite{BURKHOLDER1977182} will be utilised.

\begin{thm}
\label{thm:SCBD}If $0<p<\infty$, $B_{t}$ is a planar Brownian motion
and $\tau$ is a stopping time such that $\mathbb{E}[\textnormal{Log}(1+\tau)]<\infty$,
then

\[
\mathbb{E}(\sup_{0\le t<\infty}|B_{\tau\wedge t}|)^{p}\le c_{p}\mathbb{E}|B_{\tau}|^{p}.
\]
\end{thm}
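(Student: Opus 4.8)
The plan is to prove the inequality for all $0<p<\infty$ at once through a good-$\lambda$ (distribution-function) inequality, using the special structure of planar Brownian motion. Write $M_t=B_{\tau\wedge t}$ and $M^*=\sup_{0\le t<\infty}|M_t|=\sup_{0\le t\le\tau}|B_t|$; if $\mathbb{E}|B_\tau|^p=\infty$ there is nothing to prove, so assume $\mathbb{E}|B_\tau|^p<\infty$. Three structural facts organise the argument. Since $\Delta|z|^p=p^2|z|^{p-2}\ge0$, the function $z\mapsto|z|^p$ is subharmonic for every $p>0$, so $|M_t|$ and $|M_t|^2$ are nonnegative submartingales; since $\log|z|$ is harmonic off the origin, $\log|B_t|$ is a continuous local martingale, so by Theorem \ref{Levy} the radial part of $B$ is a time-changed one-dimensional Brownian motion in the coordinate $\log|\cdot|$, and the circles $\{|z|=\rho\}$ have hitting probabilities governed by $\log$; finally $|B_t|^2-2t$ is a martingale. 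The hypothesis $\mathbb{E}[\log(1+\tau)]<\infty$ forces $\tau<\infty$ a.s., so $M_t\to B_\tau$ and $M^*<\infty$ a.s., and---crucially---it places the stopped process in the class for which optional sampling of these (sub)martingales at $\tau$ is valid, i.e. no mass escapes as $t\to\infty$.

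The core is the good-$\lambda$ inequality: for a fixed $\beta>1$ there is $\varepsilon(\delta)$ with $\varepsilon(\delta)\to0$ as $\delta\downarrow0$ such that
\[
P\bigl(M^*>\beta\lambda,\ |B_\tau|\le\delta\lambda\bigr)\le\varepsilon(\delta)\,P\bigl(M^*>\lambda\bigr)\qquad(\lambda>0).
\]
To attack it I would set $T'=\inf\{t:|B_t|\ge\beta\lambda\}$, observe that $\{M^*>\beta\lambda\}=\{T'<\tau\}\subseteq\{M^*>\lambda\}$, and condition on $\mathcal F_{T'}$. On $\{T'<\tau\}$ the process restarts at modulus $\beta\lambda$, and optional sampling of the submartingale $|B_t|$ (legitimate by the hypothesis) gives $\mathbb{E}[\,|B_\tau|\mid\mathcal F_{T'}]\ge\beta\lambda$, while the martingale $|B_t|^2-2t$ controls the conditional second moment; together these say that, having reached a large modulus, the terminal modulus cannot typically be as small as $\delta\lambda$, which is the content of $\varepsilon(\delta)\to0$. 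Granting the good-$\lambda$ inequality, the passage to moments is routine: from
\[
P(M^*>\beta\lambda)\le\varepsilon(\delta)\,P(M^*>\lambda)+P(|B_\tau|>\delta\lambda),
\]
multiplying by $p\lambda^{p-1}$ and integrating yields
\[
\beta^{-p}\,\mathbb{E}[(M^*)^p]\le\varepsilon(\delta)\,\mathbb{E}[(M^*)^p]+\delta^{-p}\,\mathbb{E}|B_\tau|^p,
\]
and for each fixed $p$ one chooses $\delta$ small enough that $\varepsilon(\delta)<\beta^{-p}$, absorbs the first term, and obtains $\mathbb{E}[(M^*)^p]\le c_p\,\mathbb{E}|B_\tau|^p$ with $c_p=\delta^{-p}/(\beta^{-p}-\varepsilon(\delta))$. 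Equivalently, one may first use the Burkholder--Davis--Gundy inequality and the identity $\langle M\rangle_\infty=2\tau$ to reduce the claim to $\mathbb{E}[\tau^{p/2}]\le c_p\,\mathbb{E}|B_\tau|^p$, which is the same comparison in disguise.

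Two points need care. To subtract $\varepsilon(\delta)\,\mathbb{E}[(M^*)^p]$ I must know a priori that it is finite; I would first run the whole argument with $\tau$ replaced by $\tau\wedge\inf\{t:|B_t|\ge n\}$, so that $M^*\le n$, and then let $n\to\infty$ by monotone convergence, the constant $c_p$ being independent of $n$. The main obstacle is the good-$\lambda$ inequality itself: upgrading the conditional mean bound $\mathbb{E}[\,|B_\tau|\mid\mathcal F_{T'}]\ge\beta\lambda$ to the distributional statement $P(|B_\tau|\le\delta\lambda\mid\mathcal F_{T'})\le\varepsilon(\delta)$ is delicate, since a conditional mean lower bound alone does not preclude the terminal modulus from being small, and it is exactly here that the logarithmic structure and the closedness supplied by $\mathbb{E}[\log(1+\tau)]<\infty$ must be used in full. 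That the hypothesis is indispensable is shown by a planar Brownian motion run until it first meets a small circle after being allowed to wander arbitrarily far out: there $M^*$ has infinite $p$-th moment while $|B_\tau|$ is bounded, and correspondingly $\mathbb{E}[\log(1+\tau)]=\infty$.
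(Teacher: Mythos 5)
The paper itself offers no proof of this statement --- it is imported verbatim from Burkholder \cite{BURKHOLDER1977182} --- so your attempt has to stand on its own. The architecture you propose (a good-$\lambda$ inequality, multiplication by $p\lambda^{p-1}$ and integration, localisation by $\tau\wedge\inf\{t:|B_t|\ge n\}$ to justify absorbing the term $\varepsilon(\delta)\mathbb{E}[(M^*)^p]$) is the right shape and is close in spirit to Burkholder's actual argument. But the single step carrying all of the content, namely
\[
P\bigl(|B_\tau|\le\delta\lambda\mid\mathcal F_{T'}\bigr)\le\varepsilon(\delta)\ \text{ on }\{T'<\tau\},\qquad \varepsilon(\delta)\to0,
\]
is not proved, and the two tools you offer for it cannot deliver it. A lower bound on the conditional mean, $\mathbb{E}[\,|B_\tau|\mid\mathcal F_{T'}]\ge\beta\lambda$, says nothing about how much conditional mass sits below $\delta\lambda$: the mean can be large entirely because of a heavy right tail while almost all of the conditional distribution concentrates near $0$. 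The martingale $|B_t|^2-2t$ controls the conditional second moment only through $\mathbb{E}[\tau\mid\mathcal F_{T'}]$, which is not assumed finite (only $\mathbb{E}[\log(1+\tau)]<\infty$ is); and even granting first- and second-moment control, a Paley--Zygmund argument produces a \emph{lower} bound on $P(|B_\tau|>c\lambda)$, not the required upper bound on $P(|B_\tau|\le\delta\lambda)$ tending to $0$ with $\delta$.

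The gap is not cosmetic, because it sits exactly where planarity and the hypothesis $\mathbb{E}[\log(1+\tau)]<\infty$ must be used: planar Brownian motion is neighbourhood recurrent, so a path that has reached modulus $\beta\lambda$ \emph{will} eventually return to modulus $\delta\lambda$ with probability one, and the only thing preventing the conditional probability above from being close to $1$ is that $\tau$ arrives first --- which is precisely what the log-moment condition quantifies. Your own closing example (run the motion until it returns to a small circle) shows the good-$\lambda$ inequality is false without that condition, so no derivation that does not invoke it quantitatively can succeed; yet your sketch of the good-$\lambda$ step never touches it. The standard repair is Burkholder's: exploit the local martingale $\log|B_t|$ after time $T'$, bounding the probability of descending from modulus $\beta\lambda$ to modulus $\delta\lambda$ before $\tau$ by an optional-stopping estimate whose validity (uniform integrability of the stopped logarithm) is exactly what $\mathbb{E}[\log(1+\tau)]<\infty$ together with $\mathbb{E}\log^+|B_\tau|<\infty$ purchases. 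Finally, the remark that one may ``equivalently'' pass through the Burkholder--Davis--Gundy inequality and prove $\mathbb{E}[\tau^{p/2}]\le c_p\mathbb{E}|B_\tau|^p$ is not a reduction: that comparison is a theorem of the same depth, resting on the same missing estimate.
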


%\begin{rem}
%In Burkholder \cite{BURKHOLDER1977182} $\mathbb{E}[\textnormal{\ensuremath{\ln}}(\tau)]<\infty$
%is the initial assumption given, however for small $\tau$ the expectation
%may tend to negative infinity. It can be shown that $\mathbb{E}[\textnormal{\ensuremath{\ln}}(1+\tau)]<\infty$
%is an equivalent assumption which resolves this issue.
%\end{rem}

{\it Remark:} Clearly, if $\mathbb{E}[\tau^{p}]<\infty$ for some $p>0$,
then $\mathbb{E}[\textnormal{\ensuremath{\ln}}(1+\tau)]<\infty$ and Theorem \ref{thm:SCBD} may be applied. \\

We will also use the following convergence theorem given in \cite{knight1984kai}.

\begin{lem}
\label{lem:OSTalt}If $X=(X_{n},\mathcal{F}_{n})$ is a discrete time
submartingale and, for some $p>1$,

\[
\sup_{n}\mathbb{E}|X_{n}|^{p}<\infty
\]

then there is an integrable random variable $X_{\infty}$ such that 

\[
X_{n}\rightarrow X_{\infty}\ a.s.\qquad X_{n}\overset{L^{1}}{\rightarrow}X_{\infty}.
\]
 
\end{lem}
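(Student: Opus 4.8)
The plan is to derive the two conclusions from genuinely separate mechanisms: almost sure convergence from the classical submartingale convergence theorem, and then the upgrade to $L^1$ convergence from uniform integrability. First I would note that the hypothesis forces $L^1$-boundedness: by Lyapunov's (equivalently Jensen's) inequality $\mathbb{E}|X_n| \le (\mathbb{E}|X_n|^p)^{1/p}$, and hence $\sup_n \mathbb{E}|X_n| \le (\sup_n \mathbb{E}|X_n|^p)^{1/p} < \infty$. An $L^1$-bounded submartingale converges almost surely to an integrable limit $X_\infty$; this is Doob's submartingale convergence theorem, whose proof proceeds through the upcrossing inequality (valid for submartingales, not only martingales) together with an application of Fatou's lemma to guarantee that the almost sure limit satisfies $\mathbb{E}|X_\infty| < \infty$. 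This delivers the first assertion $X_n \rightarrow X_\infty$ a.s.

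For the $L^1$ convergence I would invoke the Vitali convergence theorem: once almost sure convergence is in hand, it suffices to show that the family $(X_n)$ is uniformly integrable. Here the strict inequality $p>1$ is essential, and the required estimate is direct: for any $K>0$,
\[
\mathbb{E}\big[|X_n|\,\mathbf{1}_{\{|X_n|>K\}}\big] \le \frac{1}{K^{p-1}}\,\mathbb{E}\big[|X_n|^p\,\mathbf{1}_{\{|X_n|>K\}}\big] \le \frac{1}{K^{p-1}}\,\sup_m \mathbb{E}|X_m|^p,
\]
so the supremum over $n$ of the left-hand side tends to $0$ as $K\rightarrow\infty$. Thus $(X_n)$ is uniformly integrable, and combined with the almost sure convergence this yields $X_n \overset{L^1}{\rightarrow} X_\infty$.

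The proof is essentially an assembly of standard facts, so there is no serious obstacle; the point deserving the most care is simply keeping straight which hypothesis drives which conclusion. The almost sure part uses the submartingale structure (through the upcrossing inequality) and only $L^1$-boundedness, whereas the $L^1$ part is purely measure-theoretic and relies on the strictly stronger $L^p$ bound to produce uniform integrability. It is worth emphasising that the submartingale hypothesis plays no role in the second step, and that $L^p$-boundedness with $p>1$ is exactly what is needed there — an $L^1$ bound alone would not guarantee uniform integrability, so the requirement $p>1$ cannot be dropped.
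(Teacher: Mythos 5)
Your proof is correct and complete: Jensen gives $L^1$-boundedness, Doob's submartingale convergence theorem gives the almost sure limit, and the $L^p$ bound with $p>1$ gives uniform integrability, which upgrades a.s.\ convergence to $L^1$ convergence via Vitali. Note that the paper does not prove this lemma at all --- it is imported without proof from the cited reference --- and your argument is the standard textbook one, so there is nothing to compare against beyond confirming its correctness.
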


We thus show a probabilistic proof for the following Cauchy distribution
identity.

\begin{prop}
Let C be distributed $\textnormal{Cauchy}(a,b)$ and $\gamma=a+bi\in\mathbb{C}$.
Let $\phi(z)=z^{\alpha}$ be defined for $z\in\mathbb{C}$ and $\alpha\in(0,1)$,
using a principal branch of the logarithm; that is, $z^{\alpha}=e^{\alpha\textnormal{Log}(z)}$,
where $\textnormal{Log}(z)=\text{\ensuremath{\ln|z|+i\textup{Arg}(z)}}$
and $-\pi<\textnormal{\textup{Arg}}(z)\le\pi$. Then
\[
\mathbb{E}[C^{\alpha}]=\gamma^{\alpha}.
\]
\end{prop}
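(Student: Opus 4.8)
The plan is to mirror the proof of the previous proposition: start a planar Brownian motion $B_t$ at $\gamma = a+bi$ in the upper half-plane $\mathbb{H}$, stop it at $\tau := T(\mathbb{H})$, and recall that $B_\tau \sim \textnormal{Cauchy}(a,b) \overset{d}{=} C$, with $B_\tau$ real-valued. Since the principal branch of $\phi(z)=z^\alpha$ is analytic on $\mathbb{C}\setminus(-\infty,0]$ and hence on $\mathbb{H}$, Lévy's theorem (Theorem \ref{Levy}) shows that $N_t := \phi(B_{t\wedge\tau}) = B_{t\wedge\tau}^{\alpha}$ is a continuous local martingale starting at $\gamma^\alpha$. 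If the passage to the limit can be justified, then $\mathbb{E}[C^\alpha] = \mathbb{E}[B_\tau^\alpha] = \lim_t \mathbb{E}[N_t] = N_0 = \gamma^\alpha$. The obstruction, and the reason this is harder than the previous proposition, is that $\phi$ is unbounded, so I cannot invoke the bounded optional stopping theorem directly and must instead establish that the stopped martingale is uniformly integrable.

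To that end I would first verify the hypotheses of Theorem \ref{thm:SCBD}. The half-plane is a wedge of aperture $\pi$, so Proposition \ref{prop:wedgehardy} gives $\mathbb{E}[\tau^{q}] < \infty$ for all $q < \tfrac12$; in particular $\mathbb{E}[\ln(1+\tau)]<\infty$ by the remark following Theorem \ref{thm:SCBD}, so that theorem is available for every exponent. Second, since the Cauchy distribution has tails of order $|x|^{-2}$, one has $\mathbb{E}|C|^{s}<\infty$ precisely when $s<1$; this is exactly where the hypothesis $\alpha\in(0,1)$ enters.

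The key step is then to choose $p\in(1,\tfrac{1}{\alpha})$, which is possible precisely because $\alpha<1$, so that $\alpha p<1$. With this choice $|N_t|^{p}=|B_{t\wedge\tau}|^{\alpha p}$, and applying Theorem \ref{thm:SCBD} with exponent $\alpha p$ gives
\[
\sup_{t}\mathbb{E}|N_t|^{p} \le \mathbb{E}\Bigl[\bigl(\sup_{s}|B_{s\wedge\tau}|\bigr)^{\alpha p}\Bigr] \le c_{\alpha p}\,\mathbb{E}|B_\tau|^{\alpha p} = c_{\alpha p}\,\mathbb{E}|C|^{\alpha p} < \infty.
\]
Thus $|N_t|$ is a nonnegative submartingale bounded in $L^{p}$ for some $p>1$. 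Applying Lemma \ref{lem:OSTalt} along a discretization $t_n\uparrow\infty$, and using continuity of $t\mapsto N_t$ together with $\tau<\infty$ almost surely to identify the limit, yields $N_{t_n}\to B_\tau^{\alpha}$ in $L^{1}$. Boundedness in $L^{p}$ with $p>1$ also makes $(N_t)$ a genuine uniformly integrable martingale, so $\mathbb{E}[N_{t_n}]=\gamma^\alpha$ for every $n$, and passing to the limit gives $\mathbb{E}[C^\alpha]=\mathbb{E}[B_\tau^{\alpha}]=\gamma^\alpha$. I expect the main technical care to lie in the uniform integrability argument, specifically the dovetailing of the two constraints $p>1$ and $\alpha p<1$, together with the minor bookkeeping needed to pass from the discrete-time statement of Lemma \ref{lem:OSTalt} back to the continuous-time martingale.
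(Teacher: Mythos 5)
Your proof is correct and follows essentially the same route as the paper's: in both, the key step is to choose an exponent strictly between $1$ and $1/\alpha$ so that Burkholder's inequality (Theorem \ref{thm:SCBD}), combined with the finiteness of Cauchy absolute moments of order $\alpha\beta<1$, yields $L^{\beta}$-boundedness of the stopped martingale for some $\beta>1$, after which Lemma \ref{lem:OSTalt} and optional stopping at the bounded times $\tau\wedge n$ finish the argument. The only (harmless, arguably slightly cleaner) difference is that you apply Theorem \ref{thm:SCBD} directly to $B$ in $\mathbb{H}$ with exponent $\alpha p$, whereas the paper applies it to the time-changed image process in the wedge $\phi(\mathbb{H})$ with exponent $\beta$; the resulting bound $\sup_t\mathbb{E}|N_t|^{p}\le c\,\mathbb{E}|C|^{\alpha p}<\infty$ is the same.
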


\begin{proof}
Start a Brownian motion at $\gamma\in\mathbb{H}$ and stop it at $T(\mathbb{H})$.
$\phi$ is defined to be analytic, thus $B_{t}^{\alpha}=M_{t}$ is
a martingale. Since $\alpha\in(0,1)$, $\phi(\mathbb{H})\subset\mathbb{H}$
and is a wedge of size $\alpha\pi$. Define $\tau=T(\phi(\mathbb{H}))$.
Thus, using (\ref{thm:simplyconnecthardy}) for $p<\frac{1}{\alpha}$,
$\mathbb{E}(\tau^{\frac{p}{2}})<+\infty$, and so
$\mathbb{E}[\textnormal{Log}(1+\tau)]<\infty$. \\

Thus, by (\ref{thm:SCBD}), for any $0<q<\infty$

\[
\mathbb{E}_{x}[(\sup_{0\le t<\infty}|B_{\tau\wedge t}|)^{q}]\le c_{q}\mathbb{E}|B_{\tau}|^{q}.
\]

We define $1<\beta<\frac{1}{\alpha}$ so that $\alpha\beta<1$. Setting
$q=\alpha\beta$ we have, for all $t$,

\begin{align*}
\sup_{0\le t<\infty}\mathbb{E}_{x}[|M_{\tau\wedge t}|{}^{\beta}] & \le\mathbb{E}_{x}[(\sup_{0\le t<\infty}|M_{\tau\wedge t}|)^{\beta}]\\
 & \le c_{p}\mathbb{E}|M_{\tau}|^{\beta}=D<\infty,
\end{align*}

where $D=\mathbb{E}|C|^{\alpha\beta}$ is a finite constant since
the Cauchy distribution has finite absolute fractional moments. By discretising
we have $\sup_{n}\mathbb{E}_{x}[|M_{\tau\wedge n}|{}^{\beta}]<\infty$
and so by (\ref{lem:OSTalt}), as $n\rightarrow\infty$, $M_{\tau\wedge n}\rightarrow M_{\tau}$
in $L^{1}$ and almost surely and thus in turn $\mathbb{E}[M_{\tau\wedge n}]\rightarrow\mathbb{E}[M_{\tau}]$.
However since $\tau\wedge n$ is bounded almost surely, we can apply
the optional stopping theorem to the stopped martingale $M_{\tau\wedge n}$
hence achieving the result 
\[
\mathbb{E}[M_{\tau\wedge n}]=\mathbb{E}[M_{0}]=\mathbb{E}[M_{\tau}]
.\]

Hence we arrive at the result

\[
\mathbb{E}[C^{\alpha}]=\mathbb{E}[B_{\tau}^{\alpha}]=\mathbb{E}[B_{0}^{\alpha}]=\gamma^{\alpha}.
\]
\end{proof}

%\begin{cor}
%The following identity holds true for $\alpha r<1$. 

%\[
%\mathbb{E}[B_{\tau}^{\alpha}]=\int_{-\infty}^{\infty}y^{\alpha}\frac{dy}{2r\pi y(y^{\frac{1}{2r}}+y^{-\frac{1}{2r}})}=\gamma^{\alpha}
%\]
%\end{cor}

\bibliographystyle{plain}
\bibliography{DRAFT}

\end{document}